\RequirePackage{etex}
\documentclass[12pt]{amsart}
\usepackage[utf8]{inputenc} 
\synctex=1
\usepackage[active]{srcltx}
\usepackage{a4wide}
\usepackage{amsthm,amsfonts,amsmath,mathrsfs,amssymb}
\usepackage{dsfont}
\usepackage{mathtools}
\usepackage[T1]{fontenc}
\usepackage[utf8]{inputenc}
\usepackage{enumerate}
\usepackage{comment}
\usepackage[left=4cm,top=4cm,right=4cm,bottom=4cm]{geometry}
\setlength{\textheight}{20cm} \textwidth16cm \hoffset=-2truecm
\numberwithin{equation}{section}

\usepackage{geometry}
\usepackage{graphicx}
\usepackage{amssymb}
\usepackage{amsmath}
\usepackage{amsthm}
\usepackage{listings}
\usepackage[colorlinks=true,urlcolor=blue,citecolor=blue,linkcolor=blue]{hyperref}
\usepackage{esint}
\usepackage{xcolor}

\usepackage{etex}
\usepackage[all]{xy}
\usepackage{pgf,tikz}
\usetikzlibrary{arrows}
\usetikzlibrary{patterns}
\usepackage{hyperref}

\newcommand{\R}{{\mathbb R}} 
\newcommand{\C}{{\mathbb C}} 
\newcommand{\N}{{\mathbb N}}

\newcommand{\T}{{\mathbb T}} 
\newcommand{\D}{{\mathbb D}}

\renewcommand{\Re}{\mathrm{Re}}

\RequirePackage{etex}


\newtheorem{theorem}{Theorem}[section]
\newtheorem{lemma}[theorem]{Lemma}

\theoremstyle{definition}
\newtheorem{definition}[theorem]{Definition}
\newtheorem{example}[theorem]{Example}

\theoremstyle{remark}
\newtheorem{remark}[theorem]{Remark}

\numberwithin{equation}{section}





\makeatletter
\@namedef{subjclassname@2020}{%
	\textup{2020} Mathematics Subject Classification}
\makeatother


\author[C. G\'omez-Cabello ]{Carlos G\'omez-Cabello}
\address{Departamento de Matem\'atica Aplicada II and IMUS, Escuela Politécnica Superior, Universidad de Sevilla, Calle Virgen de África, 7 41011 Sevilla, Spain}
\email{cgcabello@us.es}
\author[P. Lefevre]{Pascal Lefèvre}
\address{
Univ. Artois, UR 2462, Laboratoire de Mathématiques de Lens (LML), F-62 300 LENS,
FRANCE
}
\email{pascal.lefevre@univ-artois.fr}
\author[H. Queff\'elec]{Herv\'e Queff\'elec}
\address{Univ. Lille Nord de France, USTL, Laboratoire Paul Painlevé U.M.R. CNRS 8524, F-59 655
VILLENEUVE D’ASCQ Cedex, FRANCE
}
\email{herve.queffelec@univ-lille.fr}
\title[Integration type operators on Dirichlet series]{Integration type operators and point evaluation on weighted Bergman spaces of Dirichlet series.}

\subjclass[2020]{Primary  30B50, 30H20}

\keywords{Integration operator, pointwise evaluation functional, Bergman spaces of Dirichlet series}

\date{\today}

\begin{document}

\maketitle
\begin{abstract}
The theory of Banach spaces of Dirichlet series has drawn an increasing attention in the recent 25 years. 
One of the main interest of this new theory is that of defining analogues of the classical spaces of analytic functions on the unit disc. 
In this sense, Bergman spaces were introduced  several years ago contributing to broaden the picture of this theory. In the paper presenting these new family of spaces, some of its most essential questions were considered. 
Among them, some partial estimates of the norm of the pointwise evaluation functional were given. 
In this work, we introduce a version of the Riemann-Liouville semigroup acting on these spaces, and, with this new tool, we are able to estimate the norm of this functional.
\end{abstract}

\section{Introduction}
The systematic study of Banach spaces of Dirichlet series began in the seminal paper \cite{seip}, where Hedenmalm, Lindqvist, and Seip introduced the Hilbert-Hardy space of Dirichlet series. This space consists in those Dirichlet series with square summable coefficients and gives rise to a Banach space of analytic functions in the maximal half-plane $\{s\in\C:\text{Re}(s)>1/2\}$. In the same work, the space $\mathcal{H}^{\infty}$ of bounded Dirichlet series in the right half-plane was also presented.

\ 
In 2002, Bayart \cite{bayart} extended the definition of the Hardy space $\mathcal{H}^2$ to the range $1\leq p<\infty$, which meant the introduction of a new family of Banach spaces of analytic functions: the $\mathcal{H}^p$-spaces. Little after, McCarthy \cite{mccarthy} was the first to consider Bergman spaces analogues in the setting of Dirichlet series, focusing its study in the Hilbert case. Several years later, in \cite{pascal}, Bailleul and Lefèvre 
embedded McCarthy's spaces in the larger family of Banach spaces of Dirichlet series  $\mathcal{A}^p_{\mu}$ where $\mu$ is a probability measure on $(0,\infty)$ such that $0\in\text{supp}(\mu)$. These spaces seem to be of significant interest since they have motivated several works such as \cite{maxime}, \cite{chen}, \cite{fu} or \cite{wang}. Let us remark that the main part
of these works studies 
a specific choice of the measure  $\mu$. More concretely, the family of density probability measures $d\mu_{\alpha}(\sigma)=h_{\alpha}(\sigma)d\sigma$, where 
\[
h_{\alpha}(\sigma)=\frac{2^{\alpha+1}}{\Gamma(\alpha+1)}\sigma^{\alpha}\,{\rm e}^{-2\sigma},\quad \alpha>-1.
\]
In such case, we shall write $\mathcal{A}^p_{\alpha}=\mathcal{A}^p_{\mu_{\alpha}}$. Some of the results stated in this work will refer to these specific spaces. We will write $\mathcal{A}^p_{\alpha,\infty}$ to denote the subspace of $\mathcal{A}^p_{\alpha}$ consisting of those Dirichlet series whose first coefficient equals zero. For a real number $\theta$, we shall denote $\C_{\theta}=\{s\in\C:\Re(s)>\theta\}$. For $\theta=0$, we write $\C_+=\C_0$.

\ 

One of the most natural questions arising when studying Banach spaces of Dirichlet series is that of computing or, at least, estimating the norm of the evaluation functionals. 
We recall that given a Banach space $X$ of analytic functions in a domain $\Omega$ in the complex plane $\C$ and a point $s\in\Omega$, the evaluation function at the point $s$, denoted by $\delta_s$, is defined as $\delta_s(f)=f(s)$, $f\in X$. 
For the case of the $\mathcal{H}^p$ spaces, $p\in[1,\infty)$, Bayart computed the norm of $\delta_s$, $s\in\C_{1/2}$, showing that $\|\delta_s\|_{(\mathcal{H}^p)^*}=\zeta(2\Re(s))$ (see \cite[Theorem 3]{bayart}). For the case of $\mathcal{A}^p_{\alpha}$-spaces, $p\in[1,\infty)$ and $s\in\C_{\frac{1}{2}}$, the following upper estimates are known (\cite[p.23]{pascal}):

$$\|\delta_s\|_{(\mathcal{A}^p_{\alpha,\infty})^*}\lesssim\frac{1}{\big({\rm Re}(s)-1/2\big)^{\frac{\alpha+2}{p}}}\quad\hbox{and}\quad\|\delta_s\|_{(\mathcal{A}^p_{\alpha})^*}\lesssim\Big(\frac{{\rm Re}(s)}{{\rm Re}(s)-1/2}\Big)^{\frac{\alpha+2}{p}}, \quad \alpha>-1\cdot$$

For $\alpha\in(-1,0)$, the lower estimate of the norm by a term equivalent to the right hand side still holds (\cite[p.26]{pascal}). 
However, for $\alpha\geq0$, the only known (until now) lower estimate was the following:
\begin{equation*}
\|\delta_s\|_{(\mathcal{A}^p_{\alpha})^*}\geq\frac{c}{({\rm Re}(s)-1/2)^{\frac{\alpha+2}{p}}|\log(2\sigma-1)|^\frac1p}\cdot
\end{equation*}
For the non-familiarised reader, let us point out that the consideration of point evaluation $\delta_s$ in the half-plane $\C_{\frac{1}{2}}$ is justified by the fact that $\mathcal{A}^p_{\mu}$ is a Banach space of analytic functions in this half-plane (\cite[Theorem 5]{pascal}).
This is also the case for the $\mathcal{H}^p$-spaces (\cite[p.207-208]{bayart}).

The organisation of the paper is the following. 
In Section \ref{operador} we present the Riemann-Liouville $I_t$ operator but acting on Dirichlet series. 
We also link the norm $\mathcal{A}^p_{\mu}$ of $f$, for $\mu$ a density measure, and the norm of $I_tf$. 
The introduction of this operator will allow us to tackle the norm estimation of the functional $\delta_s$. 
In Section \ref{integration}, we characterise the injection of $\mathcal{H}^p$ in $\mathcal{A}^p_{\alpha}$ in terms of the boundedness of the operator $I_t$.
The main result of this work is established in Section \ref{resultado}. 
In Theorem \ref{MAIN}, the following is proved for $1\leq p<\infty$
\begin{equation}\label{normeeval}
\forall\alpha>-1,\quad\|\delta_s\|_{(\mathcal{A}^p_{\alpha})^*}\approx\Big(\frac{{\rm Re}(s)}{{\rm Re}(s)-1/2}\Big)^{\frac{\alpha+2}{p}}\qquad \text{for every }s\in\C_{\frac{1}{2}}\;.
\end{equation}
This result does not only extend the already known lower estimates given in \cite[p.26]{pascal} from the range $\alpha\in(-1,0)$ to any value of $\alpha>-1$, but it also provides an affirmative answer to Question 2 from \cite{fu}.
The knowledge of the asymptotic behaviour of the point evaluation near the boundary on spaces of analytic functions is important and has many potential applications.
For instance, to mention at least one, it is one of the key ingredients in a forthcoming work on the Volterra operator \cite{GLQ2}.
Section \ref{introductory} is mainly expository and it is devoted to introduce all the background material which will be needed in the next sections.

\ 

We will use the notation $f(x)\lesssim g(x)$ if there is some constant $C>0$ such that $|f(x)|\leq |g(x)|$ for all $x$. If we have simultaneously that $f(x)\lesssim g(x)$ and that $g(x)\lesssim f(x)$, we write $f\approx g$.

\section{Dirichlet series and the spaces \texorpdfstring{$\mathcal{H}^p$}{Hp} and \texorpdfstring{$\mathcal{A}^p_{\mu}$}{A}}\label{espacios}\label{introductory}
\subsection{Dirichlet series}
We recall that a convergent Dirichlet series $\varphi$ (we write $\varphi\in\mathcal{D}$) is a formal series
$$
\varphi(s)=\sum_{n=1}^{\infty}a_nn^{-s},
$$
which converges somewhere, equivalently, in some half-plane $\C_{\theta}$ (see \cite[Lemma 4.1.1]{queffelecs}). When a Dirichlet series  $\varphi$ has only finitely many non-zero coefficients, we say that $\varphi$ is a Dirichlet polynomial. If $\varphi$ just has one non-zero coefficient, the series $\varphi$ is called a Dirichlet monomial. The first remarkable difference between Taylor series and Dirichlet series is that the latter ones converge in half-planes. In fact, there exist several {\sl abscissae of convergence} which, in general, do not coincide. The most remarkable ones are the following:  
$$
\sigma_{c}(\varphi)=\inf\{ \sigma:  \sum_{n=1}^{\infty}a_nn^{-s} \textrm{ is convergent on $\C_{\sigma}$}\};
$$
$$
\sigma_{u}(\varphi)=\inf\{ \sigma:  \sum_{n=1}^{\infty}a_nn^{-s} \textrm{ is uniformly convergent on $\C_{\sigma}$}\};
$$
$$
\sigma_{b}(\varphi)=\inf\{ \sigma:  \sum_{n=1}^{\infty}a_nn^{-s} \textrm{ has a bounded analytic extension to $\C_{\sigma}$} \};
$$
$$
\sigma_{a}(\varphi)=\inf\{ \sigma:  \sum_{n=1}^{\infty}a_nn^{-s} \textrm{ is absolutely convergent on $\C_{\sigma}$}\}.
$$

Let us mention that $\sigma_u=\sigma_b$ by Bohr's theorem (see \cite[Th.3.2.3]{queffelecs}).

The infinite polytorus $\T^{\infty}$ can be identified with the group of complex-valued characters $\chi$ on positive integers, more precisely arithmetic maps satisfying $|\chi(n)|=1$ for all $n\in\N$ and $\chi(mn)=\chi(m)\chi(n)$, for all $m,n\in\N$. Then, given $\chi\in\T^{\infty}$ and a Dirichlet series $f(s)=\sum_{n\geq1}a_nn^{-s}$ converging in some half-plane $\C_{\theta}$, the Dirichlet series
\[
f_{\chi}(s)=\sum_{n=1}^{\infty}a_n\chi(n)n^{-s}.
\]
is the normal limit in $\C_{\theta}$ of some sequence $\{f_{i\tau_k}\}_k$, where $\{\tau_k\}\in\R$.

We recall that, when $f$ is a Dirichlet series converging in some half place $\C_a$, then, for $s_0\in\overline{\C_0}$, we denote
\begin{equation*}
\forall s\in\C_a\,,\qquad f_{s_0}(s)=f(s+s_0).
\end{equation*}

\subsection{The \texorpdfstring{$\mathcal{H}^p$}{Hp} spaces }One of the key ingredients which enabled in \cite{seip} the introduction of the Hardy spaces of Dirichlet series is an observation due to H. Bohr. 
Essentially, Bohr observed that a Dirichlet series $f(s)=\sum_{n\geq1}a_nn^{-s}$ could be seen as a function on the infinite polydisc $\D^{\infty}$ by means of the fundamental theorem of arithmetic. 
Indeed, given $n\geq2$, it can be (uniquely) written as $n=p_1^{\alpha_1}\cdots p_r^{\alpha_r}$, where $\alpha_j\in\N\cup\{0\}$ for all $j$, and $\{p_j\}$ is the sequence of prime numbers. Then, letting $z_j=p_j^{-s}$, 
\[
f(s)=\sum_{n=1}^{\infty}a_n(p_1^{-s})^{\alpha_1}\cdots (p_r^{-s})^{\alpha_r}=\sum_{n=1}^{\infty}a_nz_1^{\alpha_1}\cdots z_r^{\alpha_r}.
\]
This allows us to write a Dirichlet series $f$ as a function $\mathcal{B}f$ in the infinite polydisc 
\[
\mathcal{B}f(z_1,z_2,\ldots)=\sum_{\substack{n\geq1\\ n=p_1^{\alpha_1}\cdots p_r^{\alpha_r}}}a_nz_1^{\alpha_1}\dots z_r^{\alpha_r}.
\]
Let $m_{\infty}$ be the normalised Haar measure on the infinite polytorus $\T^{\infty}$, that is, the product of the normalised Lebesgue measure of the torus $\T$ in each variable. Fix $p\in[1,\infty)$. We define the space $H^p(\T^{\infty})$ as the closure of the analytic polynomials in the $L^p(\T^{\infty},m_{\infty})$-norm. Now, given a Dirichlet polynomial $P$, its Bohr lift $\mathcal{B}P$ belongs to the space $H^p(\T^{\infty})$. This allows us to define its $\mathcal{H}^p$-norm as
\[
\|P\|_{\mathcal{H}^p}
=
\|\mathcal{B}P\|_{H^p(\T^{\infty})}.
\]
The $\mathcal{H}^p$-spaces of Dirichlet series are defined as the completion of the Dirichlet polynomials in the $\mathcal{H}^p$-norm.

Given a convergent Dirichlet series $f$ and $\sigma>0$, we denote by $f_{\sigma}$ 
the convergent Dirichlet series resulting from translating $f$ by $\sigma$: $f_{\sigma}(s)=f(\sigma+s)$. 
It is worth noting that the horizontal translation can be regarded as an operator acting on convergent Dirichlet series. In this case, we denote it by $T_{\sigma}$ so that $T_{\sigma}f=f_{\sigma}$. 
In fact, more general translation can be considered by taking $f_z$ with $\text{Re}(z)\geq0$, see for instance \cite[Theorem 11.20]{defant-peris}. 
It is noteworthy to  observe 
that the $\mathcal{H}^p$ spaces remain stable under horizontal translations to the right or vertical translations. 
\subsection{The \texorpdfstring{$\mathcal{A}^p_{\mu}$}{
	A}-spaces}
Let $P$ be a Dirichlet polynomial. Given a probability measure $\mu$ on $(0,\infty)$ such that $0$ belongs to $\text{supp}(\mu)$, we define
\begin{equation}\label{norm}
\|P\|_{\mathcal{A}^p_{\mu}}
:=
\left(
\int_0^{\infty}\|P_{\sigma}\|_{\mathcal{H}^p}^pd\mu(\sigma)
\right)^{\frac1p}.
\end{equation}
This clearly defines a norm. The spaces $\mathcal{A}^p_{\mu}$ are defined as the closure of the Dirichlet polynomials with respect to the norm \eqref{norm}. 
Taking this closure defines a family of Banach spaces of Dirichlet series in the half-plane $\C_{1/2}$ (\cite[Theorem 5]{pascal}).  
Let us mention that the condition $0\in\text{supp}(\mu)$ allows among other things (see e.g. \cite[Lemma 5.6 p.19]{GLQ2}) the consideration of the Dirac mass at $0$, which would give the space $\mathcal{H}^p$ as a limiting case. Here, we consider  the strictly larger class of spaces $\mathcal{A}^p_{\mu}$.

The norm \eqref{norm} can be written explicitly when $f$ is an $\mathcal{H}^p$ function. For a general $\mathcal{A}^p_{\mu}$-function, using the fact that $T_{\sigma}(\mathcal{A}^p_{\mu})\subset\mathcal{H}^p$ (\cite[Lemma 1]{pascal}, we can see that its norm can be computed as
\[
\|f\|_{\mathcal{A}^p_{\mu}}=\lim_{\tau\to0^+}\|f_{\tau}\|_{\mathcal{A}^{p}_{\mu}}.
\]

Let us mention the following remark comparing the point evaluation on spaces $\mathcal{A}^p_{\alpha}$ and $\mathcal{A}^p_{\alpha,\infty}$. 

\begin{lemma}\label{comparnormpointeval}
Let $p\geq1$, $\alpha>-1$ and $s\in\C_{\frac{1}{2}}$.
We have
$$\|\delta_s\|_{(\mathcal{A}^p_{\alpha,\infty})^*}\leq\|\delta_s\|_{(\mathcal{A}^p_{\alpha})^*}\leq1+2\|\delta_s\|_{(\mathcal{A}^p_{\alpha,\infty})^*}\;.$$
\end{lemma}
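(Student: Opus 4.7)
The first inequality is immediate because $\mathcal{A}^p_{\alpha,\infty}$ is a closed subspace of $\mathcal{A}^p_{\alpha}$ (it is the kernel of the bounded functional $f\mapsto a_1$, see below), so restricting $\delta_s$ from the larger space to the smaller space can only decrease its operator norm.

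For the right-hand inequality, the natural move is a decomposition strategy. Given $f\in\mathcal{A}^p_{\alpha}$ with expansion $f(s)=\sum_{n\geq 1}a_n n^{-s}$, split it as $f=a_1\cdot\mathbf{1}+g$ with $g\in\mathcal{A}^p_{\alpha,\infty}$, where $\mathbf{1}$ denotes the constant Dirichlet series (the monomial $n=1$). Then
\[
|\delta_s(f)|=|a_1+g(s)|\le |a_1|+\|\delta_s\|_{(\mathcal{A}^p_{\alpha,\infty})^*}\|g\|_{\mathcal{A}^p_{\alpha}},
\]
so the whole task reduces to controlling $|a_1|$ and $\|g\|_{\mathcal{A}^p_{\alpha}}$ by $\|f\|_{\mathcal{A}^p_{\alpha}}$.

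For $|a_1|$ I would invoke the Bohr lift: for any $\sigma>0$ one has $a_1=\int_{\mathbb{T}^{\infty}}\mathcal{B}f_{\sigma}\,dm_{\infty}$ since $a_1$ is the constant Fourier coefficient of $\mathcal{B}f_\sigma$, and Jensen/Hölder therefore yields $|a_1|\le\|\mathcal{B}f_{\sigma}\|_{L^p(\mathbb{T}^\infty)}=\|f_{\sigma}\|_{\mathcal{H}^p}$. Raising this to the $p$-th power, integrating against the probability measure $\mu_{\alpha}$ and using the very definition \eqref{norm}:
\[
|a_1|^p=\int_0^{\infty}|a_1|^p\,d\mu_{\alpha}(\sigma)\le\int_0^{\infty}\|f_{\sigma}\|_{\mathcal{H}^p}^p\,d\mu_{\alpha}(\sigma)=\|f\|_{\mathcal{A}^p_{\alpha}}^p.
\]
This gives $|a_1|\le\|f\|_{\mathcal{A}^p_{\alpha}}$. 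Since $\mu_\alpha$ is a probability measure, clearly $\|\mathbf{1}\|_{\mathcal{A}^p_{\alpha}}=1$, so by the triangle inequality $\|g\|_{\mathcal{A}^p_{\alpha}}\le\|f\|_{\mathcal{A}^p_{\alpha}}+|a_1|\le 2\|f\|_{\mathcal{A}^p_{\alpha}}$.

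Combining these two estimates and taking the supremum over the unit ball of $\mathcal{A}^p_{\alpha}$ yields $\|\delta_s\|_{(\mathcal{A}^p_{\alpha})^*}\le 1+2\|\delta_s\|_{(\mathcal{A}^p_{\alpha,\infty})^*}$, as required. There is no real obstacle here; the only point that deserves care is justifying $|a_1|\le\|f_\sigma\|_{\mathcal{H}^p}$, which is precisely where the Bohr identification of $\mathcal{H}^p$ with $H^p(\mathbb{T}^\infty)$ is used.
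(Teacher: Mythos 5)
Your proof is correct and follows essentially the same route as the paper: both use the decomposition $f=a_1+g$ with $g\in\mathcal{A}^p_{\alpha,\infty}$, the bound $|a_1|\le\|f\|_{\mathcal{A}^p_{\alpha}}$, and the triangle inequality $\|g\|_{\mathcal{A}^p_{\alpha}}\le 2\|f\|_{\mathcal{A}^p_{\alpha}}$. The only (harmless) difference is that you derive the coefficient estimate $|a_1|\le\|f\|_{\mathcal{A}^p_{\alpha}}$ directly from the Bohr lift and Jensen's inequality, whereas the paper simply cites it from the coefficient estimates of Bailleul--Lef\`evre.
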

\begin{proof}
The first inequality is clear by restriction since  $\mathcal{A}^p_{\alpha,\infty}\subset\mathcal{A}^p_{\alpha}$.

For the second one, take any $f\in\mathcal{A}^p_{\alpha}$ and point out that $f-a_1\in\mathcal{A}^p_{\alpha,\infty}$ where $a_1$ is the constant Dirichlet coefficient.
Since $|a_1|\leq\|f\|_{\mathcal{A}^1_{\alpha}}\leq\|f\|_{\mathcal{A}^p_{\alpha}}$ (see \cite[Th.~9]{pascal}: there $w_1=1$), we have
$$|f(s)|\leq|a_1|+|f(s)-a_1|\leq\|f\|_{\mathcal{A}^p_{\alpha}}+\|\delta_s\|_{(\mathcal{A}^p_{\alpha,\infty})^*}\|f-a_1\|_{\mathcal{A}^p_{\alpha}} \leq\big(1+2\|\delta_s\|_{(\mathcal{A}^p_{\alpha,\infty})^*}\big)\|f\|_{\mathcal{A}^p_{\alpha}} $$
which gives the result.\end{proof}
Let us mention that a version of the preceding lemma in the more general framework $\mathcal{A}^p_{\mu}$ still holds true with the same proof.\medskip

The following three results (contained in \cite{pascal}) will be needed in the forthcoming sections.
\begin{theorem}\emph{(\cite[Corollary 1]{pascal})}\label{estimacionconocida}
Let $p\geq1$ and $\alpha>-1$. There exists a constant $c=c(\alpha,p)$ such that for every $s=\sigma+it$, $\sigma>1/2$,
\begin{equation*}
	\|\delta_s\|_{(\mathcal{A}^p_{\alpha,\infty})^*}\leq \frac{c}{\big(\sigma-1/2\big)^{\frac{\alpha+2}{p}}}\quad\hbox{and}\quad\|\delta_s\|_{(\mathcal{A}^p_{\alpha})^*}\leq c\Big(\frac{\sigma}{\sigma-1/2}\Big)^{\frac{\alpha+2}{p}}\cdot
\end{equation*}
\end{theorem}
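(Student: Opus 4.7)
The second inequality follows from the first by Lemma \ref{comparnormpointeval}: one checks that $1+2c/(\sigma-1/2)^{(\alpha+2)/p}\lesssim (\sigma/(\sigma-1/2))^{(\alpha+2)/p}$ uniformly in $\sigma>1/2$ (both sides are $\sim (\sigma-1/2)^{-(\alpha+2)/p}$ as $\sigma\to 1/2^+$ and stay bounded, away from zero, at infinity). I therefore focus on the first bound.

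Given $f \in \mathcal{A}^p_{\alpha,\infty}$ and $s = \sigma+it$ with $\sigma>1/2$, the strategy is to lift $f$ into $\mathcal{H}^p$ by a small horizontal translation and combine Bayart's evaluation estimate with the monotonicity of $u\mapsto \|f_u\|_{\mathcal{H}^p}$. For $\varepsilon\in(0,\sigma-1/2)$, \cite[Lemma 1]{pascal} ensures $f_\varepsilon\in \mathcal{H}^p$ (and its first coefficient is still zero). Writing $|f(s)|=|f_\varepsilon(s-\varepsilon)|$ and using Bayart in the form $\|\delta_w\|_{(\mathcal{H}^p)^*}^p \lesssim 1/(\Re(w)-1/2)$ gives
\[
|f(s)|^p \lesssim \frac{\|f_\varepsilon\|_{\mathcal{H}^p}^p}{\sigma-\varepsilon-1/2}.
\]
For the remaining factor, non-increasingness of $u\mapsto \|f_u\|_{\mathcal{H}^p}$ (a standard consequence of horizontal translation invariance in $\mathcal{H}^p$), combined with $\|f\|_{\mathcal{A}^p_\alpha}^p=\int_0^\infty \|f_u\|_{\mathcal{H}^p}^p h_\alpha(u)\,du$ and $h_\alpha(u)\sim c_\alpha u^\alpha$ near $0$, yields $\|f_\varepsilon\|_{\mathcal{H}^p}^p \lesssim \|f\|_{\mathcal{A}^p_\alpha}^p/\varepsilon^{\alpha+1}$ for $\varepsilon\leq 1$.

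For $\sigma-1/2 \leq 2$, the choice $\varepsilon=(\sigma-1/2)/2\leq 1$ makes the two factors combine perfectly into the desired $|f(s)|^p \lesssim \|f\|_{\mathcal{A}^p_\alpha}^p/(\sigma-1/2)^{\alpha+2}$. The main obstacle is the complementary regime $\sigma-1/2 > 2$: the monotonicity factor then stagnates at $\|f_\varepsilon\|_{\mathcal{H}^p}^p \lesssim \|f\|_{\mathcal{A}^p_\alpha}^p$ and the argument above produces only $|f(s)|^p \lesssim \|f\|^p/(\sigma-1/2)$, insufficient when $\alpha>-1$. Here the hypothesis $a_1=0$, inherited by $f_\varepsilon$, is decisive: from $|a_n|\leq \|g\|_{\mathcal{H}^p}$ (valid for every $g\in\mathcal{H}^p$) together with $\sum_{n\geq 2} n^{-\sigma_0} \lesssim 2^{-\sigma_0}$ for $\sigma_0$ large, the point-evaluation norm on the subspace $\{g\in\mathcal{H}^p: g\text{ has first coefficient }0\}$ decays like $2^{-\Re(w)}$ at infinity. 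Choosing $\varepsilon$ proportional to $\sigma$ in this regime (so $\Re(s-\varepsilon)\to\infty$ with $\sigma$) and combining with monotonicity produces an exponential-in-$\sigma$ upper bound on $|f(s)|$ that a fortiori dominates $(\sigma-1/2)^{-(\alpha+2)/p}$. Splicing the two regimes gives the claim.
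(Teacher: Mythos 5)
The paper does not prove this statement: it is quoted verbatim from \cite[Corollary~1]{pascal}, so there is no internal proof to compare against. Your argument is essentially a self-contained reconstruction of the standard proof and it is correct. The two-regime structure is sound: for $\sigma-1/2\le 2$ the choice $\varepsilon=(\sigma-1/2)/2$ together with $\|f_\varepsilon\|_{\mathcal{H}^p}^p\lesssim\|f\|_{\mathcal{A}^p_\alpha}^p/\varepsilon^{\alpha+1}$ (monotonicity of $u\mapsto\|f_u\|_{\mathcal{H}^p}$ plus $\mu_\alpha([0,\varepsilon])\approx\varepsilon^{\alpha+1}$) and Bayart's $\|\delta_w\|_{(\mathcal{H}^p)^*}^p=\zeta(2\Re w)$ gives exactly $(\sigma-1/2)^{-(\alpha+2)}$; for $\sigma-1/2>2$ the coefficient bound $|a_n|\le\|g\|_{\mathcal{H}^p}$ and $\sum_{n\ge2}n^{-x}\lesssim 2^{-x}$ give exponential decay, which dominates any power, and this is precisely where the hypothesis $a_1=0$ is needed (the full-space bound then follows via Lemma~\ref{comparnormpointeval}, as you say). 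Two small points of hygiene: (1) the blanket claim $\|\delta_w\|_{(\mathcal{H}^p)^*}^p\lesssim 1/(\Re w-1/2)$ is false for large $\Re w$ (the left side tends to $1$, the right side to $0$); you only invoke it where $\Re w-1/2\le 1$, where it does hold via $\zeta(2\sigma)\le\sigma/(\sigma-1/2)$, but you should state it with that restriction. (2) The argument should formally be run on Dirichlet polynomials with vanishing first coefficient and extended by density, since for a general element of the completion the identities $f(s)=f_\varepsilon(s-\varepsilon)$ and $f_\varepsilon\in\mathcal{H}^p$ rest on \cite[Lemma~1 and Theorem~5]{pascal}; this is routine. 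With these caveats the proposal is a valid proof.
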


\begin{theorem}\label{estiminf}
Let $p\geq1$, $\alpha\in(-1,0)$ and $a>\frac{1}{2}\,\cdot$ There exists a constant $c=c(\alpha,p,a)$ such that for every $s=\sigma+it$,
\begin{equation*}
	\sigma\in(1/2,a)\Longrightarrow\|\delta_s\|_{(\mathcal{A}^p_{\alpha})^*}\geq\|\delta_s\|_{(\mathcal{A}^p_{\alpha,\infty})^*}\geq\frac{c}{(\sigma-1/2)^{\frac{\alpha+2}{p}}}\cdot
\end{equation*}
\end{theorem}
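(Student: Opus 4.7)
The first inequality $\|\delta_s\|_{(\mathcal{A}^p_{\alpha,\infty})^*}\leq\|\delta_s\|_{(\mathcal{A}^p_\alpha)^*}$ is immediate from the inclusion $\mathcal{A}^p_{\alpha,\infty}\subset\mathcal{A}^p_\alpha$ (it is part of Lemma \ref{comparnormpointeval}), so the content of the theorem is the lower bound on $\|\delta_s\|_{(\mathcal{A}^p_{\alpha,\infty})^*}$. The plan is to exhibit an explicit test function $f_s\in\mathcal{A}^p_{\alpha,\infty}$ realizing it, i.e.\ with $|f_s(s)|/\|f_s\|_{\mathcal{A}^p_\alpha}$ of order $(\sigma-1/2)^{-(\alpha+2)/p}$. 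Writing $\e:=\sigma-1/2\in(0,a-1/2)$, the natural candidate is
$$ f_s(z)\;:=\;\bigl(\zeta(z+\bar s)-1\bigr)^{k}, $$
where $k\in\N$ is to be fixed in terms of $p$ and $\alpha$. The Dirichlet series $\zeta(z+\bar s)-1$ has no $n=1$ term, so neither does $f_s$; hence $f_s\in\mathcal{A}^p_{\alpha,\infty}$.

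The lower bound $|f_s(s)|\gtrsim\e^{-k}$ is immediate from the pole of $\zeta$ at $1$: $f_s(s)=(\zeta(2\sigma)-1)^k\sim(2\e)^{-k}$. The harder part is the upper bound on $\|f_s\|_{\mathcal{A}^p_\alpha}$. I would expand the weighted $\mathcal{A}^p_\alpha$-norm and use the Bohr-lift identity $\|g^{k}\|_{\mathcal{H}^p}^{p}=\|g\|_{\mathcal{H}^{kp}}^{kp}$ (immediate from $\|\mathcal{B}g^{k}\|_{L^p(\T^\infty)}^{p}=\|\mathcal{B}g\|_{L^{kp}(\T^\infty)}^{kp}$) to rewrite
$$ \|f_s\|_{\mathcal{A}^p_\alpha}^p\;=\;\frac{2^{\alpha+1}}{\Gamma(\alpha+1)}\int_0^\infty\|\zeta(\cdot+\bar s+\tau)-1\|_{\mathcal{H}^{kp}}^{kp}\,\tau^\alpha e^{-2\tau}\,d\tau. $$
The inner norm then has to be majorised by an inverse polynomial in $(\sigma+\tau-1/2)$; for that one uses the Euler product $\zeta(z+w)=\prod_p(1-p^{-w}z_p)^{-1}$ on the polytorus together with Fubini to reduce to a product of one-variable integrals, with $k$ taken large enough that the resulting integrand remains integrable near $\tau=0$.

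Plugging this estimate in and splitting the integration range at $\tau=\e$, the dominant contribution comes from small $\tau$: there the singular density $\tau^\alpha$ (integrable because $\alpha>-1$) combines with the kernel blow-up to yield $\|f_s\|_{\mathcal{A}^p_\alpha}\lesssim\e^{(\alpha+2)/p-k}$. Dividing the two bounds gives $\|\delta_s\|_{(\mathcal{A}^p_{\alpha,\infty})^*}\gtrsim\e^{-(\alpha+2)/p}$, as claimed, with the constant depending on $a$ through the cutoff for the exponential tail. The main technical obstacle is the sharp sub-estimate on $\|\zeta(\cdot+w)-1\|_{\mathcal{H}^{kp}}$ close to $\Re w=1/2$, which requires a careful product analysis over primes and a judicious choice of $k$. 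The assumption $\alpha<0$ enters precisely at the integral step: it is the singular weight $\tau^\alpha$ at zero that matches the kernel singularity and produces the sharp exponent $(\alpha+2)/p$. For $\alpha\geq0$ the weight is regular at zero and this elementary scheme breaks down, motivating the Riemann--Liouville machinery developed in the remainder of the paper.
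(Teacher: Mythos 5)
Your plan --- exhibit a test function with vanishing constant coefficient and compare its point value with its $\mathcal{A}^p_\alpha$-norm --- is the right one in spirit (it is essentially how the bound is obtained in \cite{pascal}, which the paper simply cites before transferring it to $\mathcal{A}^p_{\alpha,\infty}$ via Lemma~\ref{comparnormpointeval} and a trivial test of $2^{-s}$ for $\sigma$ bounded away from $1/2$). But the central quantitative claim of your sketch, namely $\|f_s\|_{\mathcal{A}^p_\alpha}\lesssim\varepsilon^{(\alpha+2)/p-k}$ for $f_s=(\zeta(\cdot+\bar s)-1)^k$ with a suitable integer $k$, is false except when $kp=2$. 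The paper's own Theorem~\ref{estimacionzm} gives, for $kp=2m$ even,
\begin{equation*}
\|\zeta(\cdot+w)-1\|_{\mathcal{H}^{kp}}^{kp}=\big\|(\zeta(\cdot+w)-1)^m\big\|_{\mathcal{H}^2}^{2}\approx(2\Re(w)-1)^{-(kp)^2/4},
\end{equation*}
so that $\|f_s\|_{\mathcal{A}^p_\alpha}^p\approx\int_0^\infty(2\varepsilon+2\tau)^{-(kp)^2/4}\tau^\alpha e^{-2\tau}\,d\tau\approx\varepsilon^{\alpha+1-(kp)^2/4}$, while $|f_s(s)|^p\approx\varepsilon^{-kp}$. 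The resulting lower bound on $\|\delta_s\|^p_{(\mathcal{A}^p_{\alpha,\infty})^*}$ is $\varepsilon^{(kp)^2/4-kp-\alpha-1}$, and matching the target exponent $-(\alpha+2)$ requires $(kp/2-1)^2\leq0$, i.e.\ $kp=2$. So your construction does work for $p=1$ ($k=2$) and $p=2$ ($k=1$), but for no other $p$ does an integer $k$ exist; and the instruction ``take $k$ large enough'' points in exactly the wrong direction, since the $\mathcal{H}^{kp}$-blow-up of $\zeta$ is quadratic in $k$ in the exponent while the gain at the point $s$ is only linear, so the ratio degrades as $k$ grows. (The integrability of the integrand near $\tau=0$, which is what you invoke $k$ for, holds for every $k$ since $\alpha>-1$; it is not the issue.)

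The repair is the one in \cite{pascal}: use the \emph{fractional} power $\zeta(\cdot+\bar s)^{2/p}$ (or $(\zeta(\cdot+\bar s)-1)^{2/p}$), defined through the Euler product, so that your identity is used in reverse and the $\mathcal{H}^p$-norm collapses to an $\mathcal{H}^2$-norm computable from coefficients. One then gets
\begin{equation*}
\|\zeta(\cdot+\bar s)^{2/p}\|_{\mathcal{A}^p_\alpha}^p=\int_0^\infty\|\zeta_{\bar s+\tau}\|_{\mathcal{H}^2}^2\,d\mu_\alpha(\tau)=\sum_{n\geq1}n^{-2\sigma}(1+\log n)^{-(\alpha+1)}\approx\varepsilon^{\alpha},
\end{equation*}
and it is this last asymptotic --- not the interplay of $\tau^\alpha$ with a kernel singularity --- that requires $\alpha<0$: for $\alpha\geq0$ the sum is only $O(|\log\varepsilon|)$ or $O(1)$, which is precisely the logarithmic loss in the pre-existing lower bound quoted in the introduction. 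Finally, note that your diagnosis of the role of $\alpha<0$ and of why the elementary scheme fails for $\alpha\geq0$ should be adjusted accordingly, and that the key ``sharp sub-estimate'' you defer is not a technical detail to be supplied later: for $kp\neq2$ it simply does not hold with the exponent you need.
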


\begin{proof}
We already know that $\displaystyle\|\delta_s\|_{(\mathcal{A}^p_{\alpha})^*}\geq\frac{c}{(\sigma-1/2)^{\frac{\alpha+2}{p}}}\cdot$
It is proved in \cite[p.26]{pascal}: see (i) when $p>1$ and (iii) for $p=1$.
This, together with Lemma~\ref{comparnormpointeval}, gives
$$\|\delta_s\|_{(\mathcal{A}^p_{\alpha,\infty})^*}\geq\frac{c}{2(\sigma-1/2)^{\frac{\alpha+2}{p}}}-\dfrac{1}{2}\geq\frac{c}{3(\sigma-1/2)^{\frac{\alpha+2}{p}}}$$
as soon as $\sigma$ is small enough, say less than $\sigma_0\in(1/2,a)$.

But when $\sigma\in(\sigma_0,a)$, we have, testing simply ${\rm e}_2(s)=2^{-s} $ (whose norm is $1$),
$$\|\delta_s\|_{(\mathcal{A}^p_{\alpha,\infty})^*}\geq2^{-\sigma}\geq2^{-a}\geq\frac{c'}{(\sigma-1/2)^{\frac{\alpha+2}{p}}}$$
for a suitable $c'>0$ not depending on $\sigma$.
We get the conclusion.\end{proof}

Let us remark that we cannot expect a uniform polynomial lower estimate for $\|\delta_s\|_{(\mathcal{A}^p_{\alpha,\infty})^*}$. 
Indeed one may check easily that $\|\delta_s\|_{(\mathcal{A}^p_{\alpha,\infty})^*}=O\big(2^{-\sigma}\big)$ when $\sigma\to+\infty$.

\begin{theorem}\emph{(\cite[Corollary 5]{pascal})}\label{estimacionzm}
Let $m\geq1$ be an integer and $\sigma>1/2$. Then, there exists a positive constant $c=c(m)$ such that
\[
\|\zeta^m(\sigma+\cdot)\|_{\mathcal{H}^2}\approx\frac{c}{(2\sigma-1)^{m^2/2}},\quad \text{when $\sigma\to\frac12$}.
\]
\end{theorem}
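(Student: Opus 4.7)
The plan is to exploit the Parseval identity in $\mathcal{H}^2$ together with an Euler product factorization that isolates the singular behavior of the relevant Dirichlet series as a power of $\zeta$.

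First, recall that $\zeta^m(s)=\sum_{n\geq1}d_m(n)\,n^{-s}$, where $d_m(n)$ is the generalized divisor function counting ordered factorizations of $n$ into $m$ positive integers. Since $\zeta^m(\sigma+\cdot)$ has Dirichlet coefficients $d_m(n)n^{-\sigma}$, and the $\mathcal{H}^2$-norm is the $\ell^2$-norm of the coefficients, we get
$$\|\zeta^m(\sigma+\cdot)\|_{\mathcal{H}^2}^2=\sum_{n\geq1}d_m(n)^2\,n^{-2\sigma}.$$
The task reduces to determining the behavior of the Dirichlet series $D(s):=\sum_{n\geq1}d_m(n)^2\,n^{-s}$ as $s\to1^+$.

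Next, since $d_m^2$ is multiplicative, $D$ admits an Euler product
$$D(s)=\prod_{p}L_p(p^{-s}),\qquad L_p(x)=\sum_{k\geq0}\binom{k+m-1}{m-1}^{\!2}x^k.$$
The key arithmetic input is that $d_m(p)=m$, so $L_p(x)=1+m^2x+O(x^2)$, which matches the beginning of the expansion of $(1-x)^{-m^2}$. Factoring
$$L_p(x)=(1-x)^{-m^2}H_p(x),\qquad H_p(x):=(1-x)^{m^2}L_p(x),$$
the polynomial $H_p$ satisfies $H_p(x)=1+O(x^2)$ with coefficients \emph{independent of} $p$ (they only depend on $m$). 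Therefore $|H_p(p^{-s})-1|\lesssim p^{-2\Re s}$ uniformly in $p$, so the product
$$G(s):=\prod_{p}H_p(p^{-s})$$
converges absolutely, and defines an analytic, nonvanishing function on $\C_{1/2}$ (one may even enlarge this half-plane slightly). Consequently,
$$D(s)=\zeta(s)^{m^2}\,G(s).$$

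Finally, substituting $s=2\sigma$ and using $\zeta(s)\sim(s-1)^{-1}$ as $s\to 1^+$,
$$\|\zeta^m(\sigma+\cdot)\|_{\mathcal{H}^2}^2=\zeta(2\sigma)^{m^2}G(2\sigma)\approx\frac{G(1)}{(2\sigma-1)^{m^2}}\quad\text{as }\sigma\to(1/2)^+,$$
and taking square roots gives the claim with $c=\sqrt{G(1)}>0$. The only delicate step is controlling $G$ on $\C_{1/2}$; it is not an obstacle in the end because the factorization $L_p=(1-x)^{-m^2}H_p$ is designed precisely so that the remaining product has summable logarithm on $\C_{1/2}$, the coefficients of $H_p$ being $p$-independent.
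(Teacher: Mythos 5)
The paper does not prove this statement; it is imported verbatim from \cite[Corollary 5]{pascal}, whose underlying argument is exactly the one you give: Parseval in $\mathcal{H}^2$ reduces the claim to the behaviour of $\sum_n d_m(n)^2 n^{-s}$ at $s=1$, which is read off from the Euler-product factorisation $\sum_n d_m(n)^2 n^{-s}=\zeta(s)^{m^2}G(s)$. Your proof is correct. Two cosmetic points: $H_p$ is a power series, not a polynomial (its coefficients are those of $(1-x)^{m^2}\sum_k\binom{k+m-1}{m-1}^2x^k$, a fixed $p$-independent series with radius of convergence $1$ and vanishing linear term, which is all that is needed for $|H_p(p^{-s})-1|\lesssim p^{-2\Re s}$ on $\Re s>1/2$); and the assertion that $G$ is nonvanishing on all of $\C_{1/2}$ is more than you need and not immediate for complex $s$ --- for the stated asymptotic it suffices that $G(2\sigma)\to G(1)>0$ as $\sigma\to(1/2)^+$, which follows since every factor $H_p(p^{-2\sigma})$ is evaluated at a real point of $(0,1/2]$ where it is positive, and the product converges uniformly on real neighbourhoods of $s=1$.
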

\section{The Dirichlet-Riemann-Liouville operator}\label{operador}
\begin{definition}
Let $f$ be an analytic function in the right half-plane $\C_+$, exponentially small when $\Re s\to \infty$. For $t>0$, we define the {\sl Riemann-Liouville operator} acting on $f$ as 
\begin{equation}\label{cete}
I_t(f)(s)=\frac{1}{\Gamma(t)}\int_s^{\infty}(u-s)^{t-1}f(u)du,\quad s\in\C_+,
\end{equation}
where $\Gamma(t)$ stands for the Gamma function at the point $t$. 
When the function $f$ happens to be a Dirichlet series, we shall refer to the operator as the {\sl Dirichlet-Riemann-Liouville operator}.
\end{definition}

Actually it is easy to check that the family of operators $(I_t)_{t>0}$ defines  an additive 
semi-group (The Riemann-Liouville semi-group on the right half line).

In the literature, the operator $I_t$ often  appears in the form given by \eqref{cete}. 
However, it will be more convenient for our purposes to rewrite  this operator 
as
\begin{align*}
I_t(f)(s)=\frac{1}{\Gamma(t)}
\int_0^{\infty}x^{t-1}f(x+s)dx.
\end{align*}
We are interested in studying the operator $I_t$ acting on Dirichlet series. As we are about to see, for the Dirichlet series to be stable under the action of the operator $I_t$, we must require their first coefficient to be zero. In particular, for $t=1$, the operator $I_t$ is the integration operator given by
\[
I(f)(s)=\int_s^{\infty}f(u)du.
\]
With this consideration, the operator $I_t$ acts on Dirichlet series, up to the sign, as a fractional integration operator.
\begin{lemma}\label{acciondir}
Let $f(s)=\sum_{n=2}^{\infty}a_nn^{-s}$ be a Dirichlet series convergent in $\C_+$.
Then,
\begin{equation}\label{expression}
I_t(f)(s)= \sum_{n=2}^{\infty}\frac{a_n}{(\log n)^t}n^{-s},\quad s\in\C_+, t>0.
\end{equation}
\end{lemma}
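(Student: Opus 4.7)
The plan is to use the alternative form $I_t(f)(s)=\frac{1}{\Gamma(t)}\int_0^{\infty}x^{t-1}f(x+s)\,dx$ given just before the lemma, plug in the Dirichlet series expansion of $f(x+s)$, and interchange summation and integration via Fubini--Tonelli. The key identity I need is
$$\int_0^\infty x^{t-1} n^{-x}\,dx = \int_0^\infty x^{t-1} e^{-x\log n}\,dx = \frac{\Gamma(t)}{(\log n)^t},$$
valid for every $n\geq 2$ (so that $\log n>0$), obtained by the change of variable $u=x\log n$.

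Next, I fix $s\in\C_+$ with $\Re(s)$ large, say $\Re(s)>1$, so that the Dirichlet series $\sum a_n n^{-s}$ converges absolutely (recall $\sigma_a\leq\sigma_c+1\leq 1$ because the series converges on $\C_+$). For such $s$, the Tonelli computation
$$\int_0^\infty x^{t-1}\sum_{n\geq 2}|a_n|\, n^{-\Re(s)-x}\,dx = \Gamma(t)\sum_{n\geq 2}\frac{|a_n|\,n^{-\Re(s)}}{(\log n)^t}<\infty$$
is finite, since the last series is dominated by $\sum|a_n|n^{-\Re(s)}<\infty$ (as $(\log n)^{-t}$ is bounded for $n\geq 2$). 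This justifies applying Fubini, yielding
$$I_t(f)(s)=\frac{1}{\Gamma(t)}\sum_{n\geq 2}a_n n^{-s}\int_0^\infty x^{t-1}e^{-x\log n}\,dx=\sum_{n\geq 2}\frac{a_n}{(\log n)^t}n^{-s}$$
on the half-plane $\{\Re(s)>1\}$.

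Finally, I extend the identity to all of $\C_+$ by analytic continuation. The left-hand side $I_t(f)$ is analytic on $\C_+$ because $f$ is analytic and exponentially small at $+\infty$, making differentiation under the integral sign legitimate. The right-hand side is a Dirichlet series whose coefficients $a_n/(\log n)^t$ are dominated by $|a_n|$ for $n\geq 3$, hence its abscissa of convergence is at most that of the original series and in particular it defines an analytic function on $\C_+$. Since the two analytic functions agree on the open subset $\{\Re(s)>1\}$ of the connected domain $\C_+$, they agree everywhere on $\C_+$.

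The only nontrivial point is the Fubini justification, which is immediate thanks to the assumption of convergence on $\C_+$; everything else is a direct computation plus standard analytic continuation, so I do not anticipate any real obstacle.
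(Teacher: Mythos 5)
Your argument is correct and follows essentially the same route as the paper: both proofs rest on the Gamma-integral identity $\int_0^\infty x^{t-1}e^{-x\log n}\,dx=\Gamma(t)(\log n)^{-t}$ applied termwise and then pass to the general series by analytic continuation (the paper does monomials, then polynomials by linearity, then invokes continuation; you justify the interchange via Fubini on the half-plane of absolute convergence and then apply the identity theorem, which is if anything more careful). The one point to tighten is your claim that the right-hand side converges on all of $\C_+$ because its coefficients are dominated by $|a_n|$ — domination only controls the abscissa of \emph{absolute} convergence, so to get analyticity of the right-hand side on $\C_+$ (needed for the identity theorem) you should instead invoke Abel summation with the bounded monotone multiplier $(\log n)^{-t}$, which is the standard fact behind the paper's parenthetical $\sigma_c(I_t(f))=\sigma_c(f)$.
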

\begin{proof}
We first prove the result for Dirichlet monomials. Indeed, given $f(s)=n^{-s}$, $n\in\N$, $n\geq2$, we have that
\begin{align*}
I_t(f)(s)=\frac{1}{\Gamma(t)}\int_0^{\infty}u^{t-1}n^{-(s+u)}du
&=\frac{n^{-s}}{\Gamma(t)}\int_0^{\infty}u^{t-1}{\rm e}^{-u\log(n)}du\\
&=\frac{n^{-s}}{(\log n)^t}\cdot
\end{align*}
Clearly, by the linearity of the operator $I_t$, we have that \eqref{expression} holds for any Dirichlet polynomial $f$. Let now $f$ be a Dirichlet series with infinitely many non-zero terms. Then, by the analytic continuation, we have that \eqref{expression} still holds in the right half-plane $\C_+$, as desired. In particular, $\sigma_c(I_t(f))=\sigma_c(f)\leq0$.
\end{proof}

\begin{definition}
Let $h$ be a positive function in $(0,+\infty)$ such that $\|h\|_{L^1(\R_+)}=1$.
\begin{itemize}\leftmargin=0cm
\item[i)] We say that $h$ satisfies the $H$-condition if there exists a function $q:(0,\infty)\to(0,\infty)$  such that
\begin{equation}\label{condicionq}
	\int_0^{+\infty}\frac{x^{t-1}q(\frac{1}{x+1})^{1/p}}{(x+1)^{t+\frac1p}}dx<\infty,
\end{equation}
and that for almost every $\lambda\in(0,1)$,
\begin{equation*}
	h(\lambda u)\leq q(\lambda) h(u),\quad \text{a.e. on $(0,\infty)$}.
\end{equation*}
\item[ii)] We say that $h$ satisfies the $D$-condition if there 
exists $C>0$ such that
$$h(2u)\leq C h(u)\quad \text{for a.e. $u$ on $(0,\infty)$}.$$
\end{itemize}
\end{definition}

\begin{example}
The integrability hypothesis in the $H$-condition might look cumbersome. However, many natural choices for $q$ satisfy \eqref{condicionq}. That is the case of the positive real functions on $(0,\infty)$, $q(\lambda)=\lambda^{\alpha}$, $\alpha>-1$. For this choice of $q$ the integral in \eqref{condicionq} becomes 
\begin{align*}
C(t)=\int_0^{+\infty}\frac{x^{t-1}}{(x+1)^{t+\frac1p+\frac{\alpha}{p}}}dx.
\end{align*}
This integral converges for any $t>0$. Indeed, when $x\to\infty$,
\[
\frac{x^{t-1}}{(x+1)^{t+\frac1p+\frac{\alpha}{p}}}\sim\frac{1}{x^{1+\frac{1+\alpha}{p}}}
\]
and $1+(1+\alpha)/p>1$.
On the other hand, since $t>0$, the term $x^{t-1}$ guarantees the integrability in a neighbourhood of zero. 
\end{example}
We will need the following fractional version of the classical Cauchy
integral formula.
\begin{lemma}\label{identidadint}{~\newline}

\begin{enumerate}[(i)]
\item  Let $\displaystyle k_t=\int_{-\infty}^{+\infty}\frac{{\rm e}^{1-iy}}{(1-iy)^{t+1}}dy,\ t>0$.
We have $\displaystyle k_t=\frac{2\pi}{\Gamma(t+1)}\not=0.$\\

\item Let $f$ be a Dirichlet series convergent in $\C_+$. Then,
\begin{equation*}
	f(\theta)=\frac{1}{k_t}\int_{-\infty}^{+\infty}\frac{I_t(f)(i\tau)}{(\theta-i\tau)^{t+1}}d\tau,\quad \theta>0.
\end{equation*}
\end{enumerate}
\end{lemma}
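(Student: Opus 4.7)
\textbf{For part (i),} I would recognize $k_t$ as a disguise of the classical Mellin--Barnes representation
$$\frac{1}{\Gamma(t+1)}\,=\,\frac{1}{2\pi i}\int_{c-i\infty}^{c+i\infty}\frac{e^z}{z^{t+1}}\,dz\qquad(c>0),$$
which is absolutely convergent since $|z^{t+1}|\sim |y|^{t+1}$ along $\Re(z)=c$ with $t+1>1$, and whose value does not depend on $c$ by Cauchy's theorem applied to thin vertical rectangles. The substitution $z=1-iy$ sends $\R$ bijectively onto the line $\Re(z)=1$ traversed from top to bottom, with $dy=i\,dz$, giving $k_t=-i\cdot(2\pi i/\Gamma(t+1))=2\pi/\Gamma(t+1)\neq 0$.

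\textbf{For part (ii),} by linearity it suffices to verify the identity on each Dirichlet monomial $f(s)=n^{-s}$, $n\geq 2$, which carries all the computational content. Lemma~\ref{acciondir} gives $I_t(f)(i\tau)=(\log n)^{-t}\,n^{-i\tau}$, so the right-hand side reads
$$\frac{1}{k_t(\log n)^t}\int_{-\infty}^{+\infty}\frac{n^{-i\tau}}{(\theta-i\tau)^{t+1}}\,d\tau.$$
The change of variable $z=(\theta-i\tau)\log n$ sends the contour onto the vertical line $\Re(z)=\theta\log n>0$; using $n^{-i\tau}=n^{-\theta}e^{z}$, $(\theta-i\tau)^{t+1}=z^{t+1}/(\log n)^{t+1}$ and $d\tau=i(\log n)^{-1}\,dz$, and applying the same Mellin--Barnes identity at $c=\theta\log n$, the expression collapses precisely to $n^{-\theta}=f(\theta)$.

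\textbf{Extension and main obstacle.} Linearity then delivers the identity for all Dirichlet polynomials whose first coefficient vanishes. For a general convergent series $f(s)=\sum_{n\geq 2}a_n n^{-s}$ one would argue by truncation: the partial sums $S_N(s)=\sum_{n=2}^N a_n n^{-s}$ satisfy the identity, and one has to pass to the limit on the right-hand side. This can be done either by approximating $f$ in an ambient norm (for instance in one of the spaces $\mathcal{H}^p$ or $\mathcal{A}^p_\mu$ in which both the point evaluation $f\mapsto f(\theta)$ and the integral functional $f\mapsto\int I_t(f)(i\tau)(\theta-i\tau)^{-(t+1)}\,d\tau$ are continuous), or directly by unfolding $I_t(f)(i\tau)=\Gamma(t)^{-1}\int_0^\infty x^{t-1}f(x+i\tau)\,dx$ and swapping the $(x,\tau)$-integrations to reduce the inner integral to a standard Cauchy formula on $\C_+$. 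The monomial case is mechanical; the genuine subtlety is this limit interchange, namely ensuring that $I_t(f)(i\cdot)$ is integrable against the kernel $(\theta-i\tau)^{-(t+1)}$ in the functional setting where the lemma is to be applied.
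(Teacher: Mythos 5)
Your proposal is correct and takes essentially the same route as the paper: both rest on the identity $\frac{1}{\Gamma(t+1)}=\frac{1}{2\pi i}\int_{c-i\infty}^{c+i\infty}{\rm e}^{z}z^{-(t+1)}\,dz$ (which the paper derives by computing the Fourier transform of $x^{t}{\rm e}^{-ax}1_{\R^{+}}(x)$ and inverting, and which you quote as the Mellin--Barnes formula), verify part (ii) on monomials by reducing to that identity at $c=\theta\log n$, and conclude by linearity. The limit interchange you flag as the ``main obstacle'' is left just as informal in the paper (``by linearity in a standard way''), and is immaterial where the lemma is actually used, since there it is only applied to Dirichlet polynomials.
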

\begin{proof} First we focus on $k_t$, which is clearly defined.
For $x\in\R$, let
$$g(x)=\frac{1}{\Gamma(t+1)} x^{t}\,{\rm e}^{-ax}1_{\R^+}(x),\quad a>0.$$
Obviously, $g\in L^1(\R)$ and, for every $y\in\R$, we compute its Fourier transform
$$\widehat{g}(y)=:\int_{\R} g(x){\rm e}^{-ixy}dx.$$
Applying the Cauchy formula to the function $z\mapsto F(z)=z^t\,{\rm e}^{-az}=\exp\big(-az+t\log z\big)$, holomorphic on $\C\setminus{\R^-}$ and continuous at $0$ (when defining $F(0)=0$), on a suitable path (depending on $y$) we get
$$\widehat{g}(y)=\frac{1}{(a+iy)^{t+1}}\cdot$$
 Since $\widehat{g}\in L^1(\R)$ as well, we get from the Fourier inversion formula applied at $x=1$:
$$\frac{{\rm e}^{-a}}{\Gamma(t+1)}=g(1)=\dfrac{1}{2\pi}\widehat{\widehat{g}}(-1)=\frac{1}{2\pi}\int_{\R} \frac{{\rm e}^{iy}}{(a+iy)^{t+1}}dy=\frac{1}{2\pi}\int_{\R} \frac{{\rm e}^{-iy}}{(a-iy)^{t+1}}dy.$$
Then, considering $a=1$ and rearranging we get $(i)$.

For part $(ii)$, let $f_m(s)=m^{-s}$, $m\in\N$, $m\geq2$, $s\in\C_+$. By the computations from Lemma \ref{acciondir}, we have that $I_t(g_m)(0)=(\log m)^{-t}$. Taking this into account, we find that
\begin{align*}
\int_{-\infty}^{+\infty}\frac{I_t(g_m)(i\tau)}{(\theta-i\tau)^{t+1}}d\tau
&=
\int_0^{+\infty}\frac{x^{t-1}}{\Gamma(t)}dx
\int_{-\infty}^{+\infty}\frac{f_m(x+i\tau)}{(\theta-i\tau)^{t+1}}d\tau 
\\&=\int_0^{+\infty}\frac{x^{t-1}m^{-x}}{\Gamma(t)}dx
\int_{-\infty}^{+\infty}\frac{{\rm e}^{-i\tau\log m}}{(\theta-i\tau)^{t+1}}d\tau \\
&=I_t(g_m)(0)\int_{-\infty}^{+\infty}\frac{{\rm e}^{-i\tau\log m}}{(\theta-i\tau)^{t+1}}d\tau .
\\&
=\int_{-\infty}^{+\infty}\frac{{\rm e}^{-iy}}{(\theta\log m\,-iy)^{t+1}}dy.
\end{align*}
Let now $a=\theta\log m$. Then, $f_m(\theta)=\Gamma(t+1)g(1)$.  In order to conclude, it suffices to argue as in part $(i)$ with the new choice of $a$ so that:
\[
\int_{-\infty}^{+\infty}\frac{{\rm e}^{-iy}}{(\theta\log m\,-iy)^{t+1}}dy
=
2\pi g(1)
=k_tf_m(\theta).
\]
We then conclude by linearity in a standard way.

\end{proof}

\begin{theorem}\label{equivnormas}
Let $d\mu(\sigma)=h(\sigma)d\sigma$ be a density probability measure on $\R_+$. Consider the positive measure given by $d\tilde{\mu}(u)=d\tilde{\mu}_{p,t}(u)=u^{pt}d\mu(u)$. Then,
\begin{enumerate}[(i)]
\item if $h$ satisfies the H-condition, there exists a constant $A>0$ such that
\begin{equation*}
	\forall f\in\mathcal{A}^p_{\widetilde{\mu}}\,,\qquad\|I_tf\|_{\mathcal{A}^p_{\mu}}\leq A\|f\|_{\mathcal{A}^p_{\widetilde{\mu}}}\;;
\end{equation*}
\item if $h$ satisfies the D-condition, 
there exists a positive constant $B>0$ such that
\begin{equation*}
	\forall f\in\mathcal{A}^p_{\mu}\,,\qquad\|f\|_{\mathcal{A}^p_{\widetilde{\mu}}}\leq B\|I_tf\|_{\mathcal{A}^p_{\mu}}\,.
\end{equation*}
\end{enumerate}
\end{theorem}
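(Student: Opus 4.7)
The plan is to prove the two inequalities separately: (i) by combining the integral representation of $I_t$ with Minkowski's inequality on $\mathcal{H}^p$, (ii) by inverting $I_t$ via the formula of Lemma~\ref{identidadint}(ii) and exploiting the Haar invariance on $\T^{\infty}$.

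For (i), I would start from
\[
(I_tf)_\sigma(s)=\frac{1}{\Gamma(t)}\int_\sigma^{\infty}(u-\sigma)^{t-1}f_u(s)\,du
\]
and apply Minkowski's integral inequality in $\mathcal{H}^p$ (viewing $u\mapsto f_u$ as a vector-valued integrand) together with the scaling $u=\sigma v$ to obtain
\[
\|(I_tf)_\sigma\|_{\mathcal{H}^p}\le\frac{\sigma^t}{\Gamma(t)}\int_1^{\infty}(v-1)^{t-1}F(\sigma v)\,dv,\qquad F(u):=\|f_u\|_{\mathcal{H}^p}.
\]
To take the $p$-th power I would use the weighted Hölder inequality
\[
\Bigl(\int\phi G\,dv\Bigr)^p\le\Bigl(\int\phi\psi\,dv\Bigr)^{p-1}\int\phi\,\psi^{-(p-1)}G^p\,dv
\]
with $\phi(v)=(v-1)^{t-1}$, $G(v)=F(\sigma v)$, and the \emph{carefully chosen} weight $\psi(v)=q(1/v)^{1/p}v^{-t-1/p}$. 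Integrating the resulting pointwise bound against $h(\sigma)\,d\sigma$, interchanging integrals by Fubini, substituting $u=\sigma v$ in the inner integral, and invoking the $H$-condition in the form $h(u/v)\le q(1/v)h(u)$ isolates $\|f\|_{\mathcal{A}^p_{\widetilde\mu}}^p$. The role of this particular $\psi$ is arithmetic: both $\int_1^{\infty}(v-1)^{t-1}\psi(v)\,dv$ and the residual $v$-integral simultaneously collapse (after $x=v-1$) to the single quantity $\int_0^{\infty} x^{t-1}q(\tfrac{1}{x+1})^{1/p}(x+1)^{-t-1/p}\,dx$, which is finite exactly by \eqref{condicionq}.

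For (ii), I would apply Lemma~\ref{identidadint}(ii) to the shifted series $(f_\chi)_{u/2}$ rather than to $f_\chi$ directly, so that the argument of $I_tf$ stays strictly inside the convergence half-plane. This yields, for every $\chi\in\T^{\infty}$ and $u>0$,
\[
f_\chi(u)=\frac{1}{k_t}\int_{\R}\frac{(I_tf)_\chi(u/2+i\tau)}{(u/2-i\tau)^{t+1}}\,d\tau.
\]
Taking absolute values, rescaling $\tau=(u/2)\xi$, and applying Minkowski's inequality in $L^p(\T^{\infty},d\chi)$ gives
\[
\|f_u\|_{\mathcal{H}^p}\le\frac{2^t}{|k_t|\,u^t}\,\|(I_tf)_{u/2}\|_{\mathcal{H}^p}\int_{\R}\frac{d\xi}{(1+\xi^2)^{(t+1)/2}},
\]
the crucial point being that Haar invariance on $\T^{\infty}$ makes $\|(I_tf)_\chi(u/2+i(u/2)\xi)\|_{L^p(d\chi)}=\|(I_tf)_{u/2}\|_{\mathcal{H}^p}$ independent of $\xi$, while the $\xi$-integral converges because $t>0$. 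Raising to the $p$-th power, multiplying by $u^{pt}h(u)$, integrating, and substituting $\sigma=u/2$ turns the right-hand side into $\int\|(I_tf)_\sigma\|_{\mathcal{H}^p}^p h(2\sigma)\,d\sigma$, and the $D$-condition $h(2\sigma)\le Ch(\sigma)$ then concludes.

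The main obstacle is the weight choice in (i): it is not at all transparent that $\psi(v)=q(1/v)^{1/p}v^{-t-1/p}$ is the right balance, and the computation hinges on the exponent identity $(t+\tfrac1p)(p-1)-pt-1=-t-\tfrac1p$, which is precisely what forces the two integrals produced by the weighted Hölder inequality to coincide with the single hypothesis \eqref{condicionq}. In (ii) the analogous subtle point is the preliminary shift by $u/2$ in the inversion formula, without which the right-hand side would involve boundary values of $I_tf$ on the imaginary axis that the $\mathcal{A}^p_\mu$-norm does not control.
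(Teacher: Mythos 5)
Your proof of part (ii) is correct and follows essentially the same route as the paper: apply Lemma~\ref{identidadint}(ii) to the translate $(f_\chi)_{\sigma/2}$ (using $I_t(f_\chi)=(I_tf)_\chi$ and translation invariance), take the $L^p(\T^\infty)$-norm with Minkowski, rescale the $\tau$-integral, and finish with the $D$-condition after the substitution $\sigma=u/2$. Part (i), however, takes a genuinely different route. After the common first step (Minkowski in $\mathcal{H}^p$ plus the scaling $u=\sigma(x+1)$), the paper simply applies Minkowski's \emph{integral} inequality a second time, now in $L^p(h\,d\sigma)$: this moves the $x$-integral outside the $\mathcal{A}^p_\mu$-norm, and the $H$-condition is then invoked inside the inner $L^p$-norm after the change of variable $u=\sigma(x+1)$, producing the constant $K/\Gamma(t)$ with $K$ the integral in \eqref{condicionq}. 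You instead run a Schur-test argument: a weighted H\"older inequality in the $v$-variable with the weight $\psi(v)=q(1/v)^{1/p}v^{-t-1/p}$, followed by Fubini and the $H$-condition. I checked your exponent identity $(t+\tfrac1p)(p-1)-pt-1=-t-\tfrac1p$, and indeed both integrals produced by your splitting collapse to the quantity in \eqref{condicionq}, so you recover exactly the same bound $K^{p-1}\cdot K=K^p$ for the $p$-th power, i.e.\ the same constant as the paper. Both arguments are valid; the paper's double-Minkowski is shorter and hides the bookkeeping in the vector-valued triangle inequality, while your Schur-test version is more hands-on and would adapt to settings where a continuous Minkowski inequality in the outer $L^p(h\,d\sigma)$-space is awkward. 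The only (minor) omission in your write-up is the density step: as in the paper, the estimates should be established for Dirichlet polynomials and then extended to all of $\mathcal{A}^p_{\widetilde\mu}$ (resp.\ $\mathcal{A}^p_\mu$) by definition of these spaces as completions.
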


\begin{remark}
Point out that the measure $\widetilde{\mu}$ need not be a probability measure. Not even a finite measure. Nonetheless, we still have $0\in\text{supp}(\mu)$. So when we write $\mathcal{A}_{\widetilde{\mu}}^p$ the definition is the same as in the case of probability measures, in particular $\|\cdot\|_{\mathcal{A}^p_{\widetilde{\mu}}}$ is defined on Dirichlet polynomials as in \eqref{norm} (replacing $\mu$ by $\widetilde{\mu}$).
\end{remark}

\begin{proof} Let $f$ be a Dirichlet polynomial.

Notice first that, setting $u=\sigma x$, $\sigma>0$, we obtain that
\begin{align*}
(I_tf)_{\sigma}(s)=\frac{1}{\Gamma(t)}\int_0^{\infty}u^{t-1}f_{u+\sigma}(s)\,du=\frac{\sigma^t}{\Gamma(t)}\int_0^{\infty} x^{t-1}f_{\sigma(x+1)}(s)\,dx.
\end{align*}
Applying twice Minkowski's inequality (first relatively to $\|\cdot\|_{\mathcal{H}^p}$ and then relatively to the norm in $L^p(\R^+,hd\sigma$)) and the latter computation, we have that
\begin{align*}
\Gamma(t)\|I_tf\|_{\mathcal{A}^p_{\mu}}
&=\Bigg(\int_0^{\infty}\left\|\int_0^{\infty}\sigma^tx^{t-1} f_{\sigma(x+1)}\,dx\right\|_{\mathcal{H}^p}^ph(\sigma)\,d\sigma\Bigg)^{\frac{1}{p}}\\
&\leq\int_0^{\infty}
\left(
\int_0^{\infty}
\left\|\sigma^tx^{t-1}f_{\sigma(x+1)}\right\|_{\mathcal{H}^p}^ph(\sigma)\,d\sigma
\right)^{\frac1p}\,dx\\
&=\int_0^{\infty}x^{t-1}\left(
\int_0^{\infty}\sigma^{pt}\left\|f_{\sigma(x+1)}\right\|_{\mathcal{H}^p}^ph(\sigma)\,d\sigma
\right)^{\frac1p}\,dx.
\end{align*}   
Now, we carry out the change of variable $u=\sigma(x+1)$ and applying the $H$-condition, we get
\begin{align*}
\|I_tf\|_{\mathcal{A}^p_{\mu}}&\leq\int_0^{\infty}\frac{x^{t-1}}{\Gamma(t)}\left(
\int_0^{\infty}\sigma^{pt}\left\|f_{\sigma(x+1)}\right\|_{\mathcal{H}^p}^ph(\sigma)d\sigma
\right)^{\frac1p}dx\\
&
= \int_0^{\infty}\frac{x^{t-1}}{\Gamma(t)(x+1)^{t+\frac1p}}
\left(
\int_0^{\infty}u^{pt}\left\|f_{u}\right\|_{\mathcal{H}^p}^ph\left(\frac{u}{x+1}\right)\,du
\right)^{\frac1p}dx    \\
&\leq \int_0^{\infty}\frac{x^{t-1}q(\frac{1}{x+1})^{\frac1p}}{\Gamma(t)(x+1)^{t+\frac1p}}\left(
\int_0^{\infty}\left\|f_{u}\right\|_{\mathcal{H}^p}^pu^{pt}h(u)\,du
\right)^{\frac1p}dx\\
&= \dfrac{K}{\Gamma(t)}\|f\|_{\mathcal{A}^p_{\tilde{\mu}}}\,.
\end{align*}
where the value of $K$ is the value of the integral \eqref{condicionq}.

Finally, since the Dirichlet polynomials are dense in $\mathcal{A}^p_{\widetilde{\mu}}$ by definition, we obtain $(i)$.

To prove $(ii)$, we begin by observing that $I_t$ is translation invariant in the sense that we have $I_t(f_{\varepsilon})=\big(I_t(f)\big)_\varepsilon$, for any $\varepsilon>0$, and  $I_t(f_{\chi})=(I_t(f))_{\chi}$ for every character $\chi$. 
This observation together with an application of Lemma \ref{identidadint} at the point $\theta=\sigma/2$ yields
\begin{align*}
k_tf_{\chi}(\sigma)=k_t(f_{\chi})_{\frac{\sigma}{2}}(\sigma/2)=\int_{-\infty}^{+\infty}\frac{(I_tf)_{\chi}(\frac{\sigma}{2}+i\tau)}{(\frac{\sigma}{2}-i\tau)^{t+1}}d\tau\;.
\end{align*}
Taking the $L^p(\T^{\infty})$-norm on both sides of the latter identity and using Minkowski's inequality we obtain the following

\begin{align*}
k_t\left(
\int_{\T^{\infty}}|f_{\chi}(\sigma)|^pdm_{\infty}(\chi)
\right)^{\frac1p}
&\leq\int_{-\infty}^{+\infty}\left(
\int_{\T^{\infty}}|(I_tf)_{\chi}(\frac{\sigma}{2}+i\tau)|^pdm_{\infty}(\chi)
\right)^{\frac1p}\frac{d\tau}{|\frac{\sigma}{2}+i\tau|^{t+1}}\\
&=\int_{-\infty}^{+\infty}\|(I_tf)_{\frac{\sigma}{2}}\|_{\mathcal{H}^p}\frac{d\tau}{(\frac{\sigma^2}{4}+\tau^2)^{\frac{t+1}{2}}}\cdot
\end{align*}
With a change of variable and recalling that $t>0$, we find that
$$k_t \left(\int_{\T^{\infty}}|f_{\chi}(\sigma)|^pdm_{\infty}(\chi)\right)^{\frac1p}
\leq \frac{\|(I_tf)_{\frac{\sigma}{2}}\|_{\mathcal{H}^p}}{\sigma^t}\int_{-\infty}^{+\infty}\frac{1}{(\frac14+x^2)^{\frac{t+1}{2}}}dx
\leq C(t)\frac{\|(I_tf)_{\frac{\sigma}{2}}\|_{\mathcal{H}^p}}{\sigma^t}$$
where $C(t)$ is a constant since $t>0$. We conclude by integrating the $p$-th power on both sides of the latter chain of inequalities with respect to the measure $d\tilde{\mu}(\sigma)=\sigma^{pt}d\mu(\sigma)$ to obtain
\begin{align*}
k_t^p \|f\|_{\mathcal{A}^p_{\widetilde{\mu}}}^p
&\leq C(t)^p\int_0^{\infty}\|(I_tf)_{\frac{\sigma}{2}}\|_{\mathcal{H}^p}^ph(\sigma)d\sigma=2C(t)^p\int_0^{\infty}\|(I_tf)_{u}\|_{\mathcal{H}^p}^ph(2u)du\\&
\leq C_1(t)\int_0^{\infty}\|(I_tf)_{u}\|_{\mathcal{H}^p}^ph(u)du\approx\|I_tf\|_{\mathcal{A}^p_{\mu}}^p
\end{align*}
where in the last inequality we have made use of the $D$-condition.

Finally, since the Dirichlet polynomials are dense in $\mathcal{A}^p_{\mu}$ by definition, we obtain $(ii)$.\end{proof}

\begin{remark}
Notice that the family of densities $d\mu_{\alpha}$ do not satisfy the D-condition. 
Nonetheless, statement $(ii)$ still holds for these densities. 
We can prove a theorem analogue to Theorem \ref{equivnormas} 
and instead of using the $D$-condition, make use of the following Lemma. In \cite[Theorem 3.8]{fu}, a similar result was proven for a specific choice of measures and only for the integration operator {\sl i.e.} in the case $t=1$.
\end{remark}

\begin{lemma}\label{lemmnormequiv}
Let $\beta>-1$ and $1\leq p<\infty$. 

For every Dirichlet polynomial $f$ whose first coefficient equals zero, we have
\begin{equation*}
\int_0^{+\infty}\|f_u\|_{\mathcal{H}^p}^pu^{\beta}du\approx \|f\|_{\mathcal{A}^p_{\mu_{\beta}}}^p
\end{equation*}
\end{lemma}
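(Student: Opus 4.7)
The direction $\|f\|^p_{\mathcal{A}^p_{\mu_{\beta}}} \lesssim \int_0^\infty \|f_u\|^p_{\mathcal{H}^p} u^\beta\,du$ is immediate, since $h_\beta(\sigma) = c_\beta\,\sigma^\beta e^{-2\sigma} \leq c_\beta\,\sigma^\beta$ with $c_\beta = 2^{\beta+1}/\Gamma(\beta+1)$. All the work lies in the opposite inequality.

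The key ingredient I plan to isolate is a sharp decay estimate exploiting the vanishing of the first coefficient: for every Dirichlet polynomial $g$ with first coefficient zero and every $u \geq 0$, one has $\|g_u\|_{\mathcal{H}^p} \leq 2^{-u}\|g\|_{\mathcal{H}^p}$. For $p=2$ this is a one-line consequence of Parseval, since $\|g_u\|_{\mathcal{H}^2}^2 = \sum_{n\geq 2}|a_n|^2 n^{-2u} \leq 2^{-2u}\|g\|_{\mathcal{H}^2}^2$. For general $p$, I would pass to the Bohr lift $F = \mathcal{B}g \in H^p(\T^\infty)$, which vanishes at the origin (and depends on only finitely many variables), and use that $\mathcal{B}g_u(z) = F(p_1^{-u}z_1, p_2^{-u}z_2,\ldots)$. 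By iterated subharmonicity of $|F|^p$ in each coordinate, the mean $(r_j)\mapsto \|F(r_1 z_1, r_2 z_2, \ldots)\|_{L^p(\T^\infty)}$ is non-decreasing in each $r_j \in [0,1]$; since $p_j^{-u} \leq 2^{-u}$ for every prime $p_j$, this gives $\|\mathcal{B}g_u\|_{L^p} \leq \|F(2^{-u}z)\|_{L^p}$, where $2^{-u}z$ denotes uniform scaling of all coordinates. For this uniform scaling, I fix $z\in\T^\infty$ and note that $\lambda \mapsto F(\lambda z)/\lambda$ is analytic on $\D$ thanks to $F(0)=0$; subharmonicity of its $p$-th power modulus gives $|F(rz)|^p \leq r^p \int_{\T}|F(\zeta z)|^p\,dm(\zeta)$. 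Integrating this pointwise estimate in $z\in\T^\infty$ and using that the diagonal rotation $z\mapsto \zeta z$ preserves the Haar measure $m_\infty$ yields $\|F(rz)\|_{L^p(\T^\infty)} \leq r\,\|F\|_{L^p(\T^\infty)}$. Taking $r = 2^{-u}$ and combining the two steps proves the decay estimate.

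Equipped with this decay estimate, I split $\int_0^\infty \|f_u\|^p u^\beta\,du = \int_0^1 + \int_1^\infty$. On $[0,1]$, the bound $e^{-2u}\geq e^{-2}$ gives directly $\int_0^1 \|f_u\|^p u^\beta\,du \leq e^2\,c_\beta^{-1}\|f\|^p_{\mathcal{A}^p_{\mu_{\beta}}}$. Applying the decay estimate to $f_{1/2}$ (which still has zero first coefficient) yields $\|f_u\|^p_{\mathcal{H}^p} \leq 2^{-p(u-1/2)}\|f_{1/2}\|^p_{\mathcal{H}^p}$ for $u\geq 1/2$, and hence $\int_1^\infty \|f_u\|^p u^\beta\,du \leq C_{\beta,p}\|f_{1/2}\|^p_{\mathcal{H}^p}$. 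Since $u\mapsto \|f_u\|_{\mathcal{H}^p}$ is non-increasing (contraction property of the horizontal translation semigroup on $\mathcal{H}^p$), one has $\|f_u\|^p_{\mathcal{H}^p}\geq \|f_{1/2}\|^p_{\mathcal{H}^p}$ for $u\in[1/4,1/2]$; integrating against the weight $u^\beta e^{-2u}$ over this interval therefore controls $\|f_{1/2}\|^p_{\mathcal{H}^p}$ by a constant multiple of $\|f\|^p_{\mathcal{A}^p_{\mu_{\beta}}}$. The two pieces combine to the desired bound. The main obstacle is the decay estimate itself: it upgrades the standard semigroup contraction $\|g_u\|_{\mathcal{H}^p}\leq\|g\|_{\mathcal{H}^p}$ to the sharp rate $2^{-u}$, and for $p\neq 2$ this genuinely needs the Bohr-lift / Schwarz-lemma argument sketched above, not just the monotonicity of polydisc means.
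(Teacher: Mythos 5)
Your proof is correct, and it rests on the same key ingredient as the paper's: the decay estimate $\|g_u\|_{\mathcal{H}^p}\le 2^{-u}\|g\|_{\mathcal{H}^p}$ for Dirichlet polynomials with vanishing first coefficient. The difference is in how that estimate is obtained and deployed. The paper simply cites it (in the form $\|f_{\delta u+(1-\delta)u}\|_{\mathcal{H}^p}\lesssim 2^{-(1-\delta)u}\|f_{\delta u}\|_{\mathcal{H}^p}$, from \cite[Theorem 8.1]{BaQS}), applies it for \emph{all} $u$ at once, and then performs the substitution $v=\delta u$ with $\delta$ chosen so that $p(\ln 2)\tfrac{1-\delta}{\delta}\ge 2$, which absorbs the gained exponential factor into the weight $v^{\beta}{\rm e}^{-2v}$ in a single stroke. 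You instead prove the decay estimate from scratch (Bohr lift, monotonicity of polydisc means in each radius, then a Schwarz-lemma argument on the diagonal slice $\lambda\mapsto F(\lambda z)/\lambda$) and then split the integral at $u=1$, using the trivial bound ${\rm e}^{-2u}\ge{\rm e}^{-2}$ near the origin and the decay plus the monotonicity of $u\mapsto\|f_u\|_{\mathcal{H}^p}$ at infinity. Both deployments work; yours is more self-contained, the paper's is shorter given the reference. One local inaccuracy to repair: the pointwise inequality $|F(rz)|^p\le r^p\int_{\T}|F(\zeta z)|^p\,dm(\zeta)$ you state is false in general (the Poisson kernel bound introduces a factor $\tfrac{1+r}{1-r}$, and explicit examples such as $G(\lambda)=\big(\tfrac{1+\lambda}{2}\big)^N$ violate it for $r$ near $1$). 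What is true, and all you need, is the integrated version: the circular means $\rho\mapsto\int_{\T}|G_z(\rho\zeta)|^p\,dm(\zeta)$ of the analytic function $G_z(\lambda)=F(\lambda z)/\lambda$ are non-decreasing, whence $\int_{\T}|F(r\zeta z)|^p\,dm(\zeta)\le r^p\int_{\T}|F(\zeta z)|^p\,dm(\zeta)$; integrating this in $z\in\T^{\infty}$ and using the invariance of $m_{\infty}$ under the diagonal rotation gives exactly $\|F(r\cdot)\|_{L^p(\T^{\infty})}\le r\,\|F\|_{L^p(\T^{\infty})}$, which is the step you actually use. With that rewording the argument is complete.
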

\begin{proof}
We only prove that $\displaystyle \int_0^{+\infty}\|f_u\|_{\mathcal{H}^p}^pu^{\beta}du\lesssim\|f\|_{\mathcal{A}^p_{\mu_{\beta}}}^p$ since the reverse inequality is obvious.

Take $\delta\in(0,1)$ such that $p(\ln2)\frac{1-\delta}{\delta}\geq2$. 
By \cite[Theorem 8.1]{BaQS} (see the proof p.581-582), we have 

\begin{align*}
\int_0^{+\infty}\|f_u\|_{\mathcal{H}^p}^pu^{\beta}du   &=
\int_0^{+\infty}\|f_{\delta u+(1-\delta)u}\|_{\mathcal{H}^p}^pu^{\beta}du
\lesssim
\int_0^{+\infty}\|f_{\delta u}\|_{\mathcal{H}^p}^p2^{-(1-\delta)up}u^{\beta}du\\
&=c_{\delta}\int_0^{+\infty}\|f_{v}\|_{\mathcal{H}^p}^p{\rm e}^{-p(\ln2)\frac{1-\delta}{\delta}v}v^{\beta} dv\\
&\lesssim
\int_0^{+\infty}\|f_v\|_{\mathcal{H}^p}^p{\rm e}^{-2v}v^{\beta}dv=\|f\|_{\mathcal{A}^p_{\beta}}^p.
\end{align*}
\end{proof}

As a consequence of Theorem \ref{equivnormas} and Lemma \ref{lemmnormequiv}, we obtain immediately the useful
\begin{theorem}\label{especials}
For every $p\geq1$, $\alpha>-1$, $t>0$ and $f\in\mathcal{D}$ whose first coefficient equals zero. 
We have $f\in\mathcal{A}^p_{\alpha+tp}$ if and only if $I_t(f)\in\mathcal{A}^p_{\alpha}$, and
\[
\|f\|_{\mathcal{A}^p_{\alpha+tp}}\approx\|I_tf\|_{\mathcal{A}^p_{\alpha}}.
\]
\end{theorem}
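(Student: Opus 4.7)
The plan is to invoke Theorem \ref{equivnormas} with the polynomial weight $h(u)=u^{\alpha}$ rather than with the exponential weight $h_\alpha$, using Lemma \ref{lemmnormequiv} to transfer back to the $\mathcal{A}^p_\alpha$-norm. The motivation is that $h_\alpha$ behaves badly under the dilations appearing in the $H$-condition: $h_\alpha(\lambda u)/h_\alpha(u)=\lambda^{\alpha}{\rm e}^{2u(1-\lambda)}$ is unbounded in $u$ for $\lambda<1$, whereas the scale-covariant weight $u^{\alpha}$ satisfies both the $H$- and $D$-conditions cleanly.

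First, I would note that $I_tf$ again has zero first Dirichlet coefficient by Lemma \ref{acciondir}. So Lemma \ref{lemmnormequiv}, applied with $\beta=\alpha$ to $I_tf$ and with $\beta=\alpha+tp$ to $f$, yields
\begin{equation*}
\|I_tf\|_{\mathcal{A}^p_{\alpha}}^p \approx \int_0^{+\infty}\|(I_tf)_u\|_{\mathcal{H}^p}^p u^{\alpha}\,du, \qquad \|f\|_{\mathcal{A}^p_{\alpha+tp}}^p \approx \int_0^{+\infty}\|f_u\|_{\mathcal{H}^p}^p u^{\alpha+tp}\,du.
\end{equation*}
It thus suffices to show these two Lebesgue-weighted integrals are comparable.

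Set $d\mu(u)=u^{\alpha}du$ (non-finite, but with $0\in\text{supp}(\mu)$), so that $d\widetilde{\mu}(u)=u^{tp}d\mu(u)=u^{\alpha+tp}du$. Then the preceding two integrals are exactly $\|I_tf\|_{\mathcal{A}^p_{\mu}}^p$ and $\|f\|_{\mathcal{A}^p_{\widetilde{\mu}}}^p$. The weight $h(u)=u^{\alpha}$ satisfies $h(\lambda u)=\lambda^{\alpha}h(u)$, giving the $H$-condition with $q(\lambda)=\lambda^{\alpha}$; the integrability requirement \eqref{condicionq} reduces to $\int_0^{+\infty}x^{t-1}(x+1)^{-t-(1+\alpha)/p}dx<\infty$, which holds because $t>0$ (integrability at $0$) and $(1+\alpha)/p>0$ (integrability at $\infty$). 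Likewise $h(2u)=2^{\alpha}h(u)$ gives the $D$-condition. I would then re-run verbatim the calculations in the proofs of parts (i) and (ii) of Theorem \ref{equivnormas} with this choice of $h$: those arguments never use the probability-measure hypothesis, only the relevant multiplicative inequality (as the remark after the theorem already points out for $\widetilde{\mu}$). This produces both inequalities between the Lebesgue-weighted integrals, and combining with the previous step completes the proof.

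The main point to verify carefully is that the computations of Theorem \ref{equivnormas} really go through for the non-integrable weight $u^{\alpha}$; once this is granted, Lemma \ref{lemmnormequiv} is precisely the device needed to strip off the factor ${\rm e}^{-2u}$ from $h_\alpha$ and reduce the problem to the scale-covariant polynomial weight, for which both structural conditions hold automatically.
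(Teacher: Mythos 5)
Your proof is correct and follows exactly the route the paper intends, which states the theorem follows from Theorem \ref{equivnormas} and Lemma \ref{lemmnormequiv} without spelling out the details: you correctly identify that one must run Theorem \ref{equivnormas} on the scale-covariant weight $u^{\alpha}$ (for which the $H$- and $D$-conditions hold, while $h_\alpha$ fails them because of the exponential factor) and use Lemma \ref{lemmnormequiv} to pass between the weights $u^{\beta}$ and $h_{\beta}$.
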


Let us mention the following particular case: for a Dirichlet polynomial $P$ whose first coefficient equals zero, and every integer $m\geq1$, we have
$I_m(P^{(m)})=P$, therefore 
$$\|P^{(m)}\|_{\mathcal{A}^p_{\alpha+mp}}\approx\|P\|_{\mathcal{A}^p_{\alpha}}$$
with underlying constants depending on $p,m,\alpha$ only.

\section{A word on the integration operator}\label{integration}
The following result provides a characterisation of the injection between $\mathcal{H}^p$ and some $\mathcal{A}^p_{\alpha}$-type spaces in terms of the boundedness of the operator $I_t$. 
Some of the implications were known, but the theorem provides a bigger picture of the boundedness of $I_t$ and we think it is interesting in its own.
\begin{theorem}
Let $1\leq p,q <\infty$, $\alpha>-1$, and $t>0$. Then, the following statements are equivalent:
\begin{enumerate}[(i)]
\item $I_t:\mathcal{H}^p\to \mathcal{A}^q_{\alpha}$ is bounded.
\item $\emph{Id}:\mathcal{H}^p\to \mathcal{A}^q_{\alpha+qt}$ is bounded.
\item $p\geq q$.
\end{enumerate}
\end{theorem}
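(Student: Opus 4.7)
My plan is to prove the cycle $(i)\Leftrightarrow(ii)$, then $(iii)\Rightarrow(ii)$, and finally the substantive step $(ii)\Rightarrow(iii)$.

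The equivalence $(i)\Leftrightarrow(ii)$ is an immediate consequence of Theorem~\ref{especials} applied with exponent $q$: for any $g\in\mathcal{D}$ whose first coefficient vanishes, $\|g\|_{\mathcal{A}^q_{\alpha+qt}}\approx\|I_t g\|_{\mathcal{A}^q_\alpha}$. For a general $f\in\mathcal{H}^p$ with first coefficient $a_1$, one has $I_t f=I_t(f-a_1)$ while $|a_1|\leq\|f\|_{\mathcal{H}^p}$, so the constant term is absorbed harmlessly in both directions. The implication $(iii)\Rightarrow(ii)$ rests on two classical inclusions: by Hölder's inequality on the probability space $(\T^{\infty},m_{\infty})$, $\mathcal{H}^p\subset\mathcal{H}^q$ whenever $p\geq q$; and since translations are contractions on $\mathcal{H}^q$, integrating against the probability measure $\mu_{\alpha+qt}$ yields $\|f\|_{\mathcal{A}^q_{\alpha+qt}}\leq\|f\|_{\mathcal{H}^q}\leq\|f\|_{\mathcal{H}^p}$.

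For $(ii)\Rightarrow(iii)$, the strategy is to exhibit, when $p<q$, a Dirichlet series in $\mathcal{H}^p$ which cannot lie in $\mathcal{A}^q_{\alpha+qt}$. Set $\beta=\alpha+qt$; since $p<q$ the interval $(1/q,1/p)$ is nonempty, so fix $\alpha_0$ in it, and for every integer $N\geq 1$ consider the Euler-type product
\[
f_N(s)=\prod_{j=1}^{N}(1-p_j^{-s})^{-\alpha_0},
\]
whose Bohr lift is the finite-variable function $\prod_{j=1}^{N}(1-z_j)^{-\alpha_0}$ on $\T^{\infty}$. Independence of coordinates factorises the two relevant norms. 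Since $p\alpha_0<1$ one obtains $\|f_N\|_{\mathcal{H}^p}^p=c_p(\alpha_0)^N<\infty$, so that $f_N\in\mathcal{H}^p$. For the $\mathcal{A}^q_\beta$ norm, the same factorisation combined with the one-variable Hardy estimate $\int_\T|1-rz|^{-q\alpha_0}dm(z)\approx(1-r)^{1-q\alpha_0}$ (valid because $q\alpha_0>1$) and the elementary $1-p_j^{-u}\approx u\log p_j$ as $u\to 0^+$ yields
\[
\|(f_N)_u\|_{\mathcal{H}^q}^q=\prod_{j=1}^{N}\int_\T|1-p_j^{-u}z|^{-q\alpha_0}dm(z)\approx C_N\,u^{-N(q\alpha_0-1)}\qquad\text{as }u\to 0^+,
\]
with $C_N>0$. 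Since $d\mu_\beta(u)\approx u^\beta du$ near the origin, the contribution to $\|(f_N)_\tau\|_{\mathcal{A}^q_\beta}^q=\int_0^{\infty}\|(f_N)_{\tau+u}\|_{\mathcal{H}^q}^q d\mu_\beta(u)$ from $u\in[0,\tau]$ is at least of order $C_N\tau^{-N(q\alpha_0-1)+\beta+1}$ as $\tau\to 0^+$. Choosing $N$ so large that $N(q\alpha_0-1)>\beta+1$ forces this quantity to blow up, whereas $\|(f_N)_\tau\|_{\mathcal{H}^p}$ stays bounded by $\|f_N\|_{\mathcal{H}^p}<\infty$. The embedding inequality $\|(f_N)_\tau\|_{\mathcal{A}^q_\beta}\leq C\|(f_N)_\tau\|_{\mathcal{H}^p}$ is thereby contradicted, and consequently $p\geq q$.

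The main obstacle is the design of the test family: restricting to a single prime variable only produces the weak Hardy--Littlewood-type constraint $q\leq p(\beta+2)$, which is strictly weaker than $p\geq q$. It is precisely the freedom to take arbitrarily many prime variables in the Euler product that amplifies the singularity exponent linearly in $N$ and overwhelms the fixed integrability threshold $\beta+1$ coming from the weight $\mu_\beta$.
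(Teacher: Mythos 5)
Your argument is correct, and while the easy implications coincide with the paper's (both reduce $(i)\Leftrightarrow(ii)$ to Theorem~\ref{especials}, and both get $(iii)\Rightarrow(ii)$ from the trivial chain $\|f\|_{\mathcal{A}^q_{\alpha+qt}}\leq\|f\|_{\mathcal{H}^q}\leq\|f\|_{\mathcal{H}^p}$), your proof of the substantive implication runs on a genuinely different test family. The paper assumes $(i)$ with $p<q$, reduces to dyadic rational $p,q$, and tests on translated powers of zeta $\zeta^m_\sigma$ as $\sigma\to\tfrac12^+$, using the asymptotics $\|\zeta^m_\sigma\|_{\mathcal{H}^2}\approx(2\sigma-1)^{-m^2/2}$ of Theorem~\ref{estimacionzm}; the contradiction comes from comparing the exponents $m^2pq/4$ and $m^2q^2/4-(qt+1+\alpha)$ for large $m$. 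You instead take finite Euler products $\prod_{j\leq N}(1-p_j^{-s})^{-\alpha_0}$ with a fixed fractional exponent $\alpha_0\in(1/q,1/p)$, so that each factor lies in $H^p$ of one variable but its $H^q$-norm blows up like $u^{-(q\alpha_0-1)}$ under translation by $u$, and you let $N\to\infty$ to overwhelm the fixed weight exponent $\beta+1$. Both proofs exploit the same mechanism --- multiplicative independence of the primes amplifies a one-variable singularity linearly (or quadratically) in the number of factors, beating the fixed integrability threshold of the weight --- but yours trades the zeta-norm asymptotics and the dyadic-rational reduction (which in the paper requires a small monotonicity argument it does not spell out) for the classical one-variable Hardy--Littlewood estimate $\int_\T|1-rz|^{-\lambda}dm\approx(1-r)^{1-\lambda}$, $\lambda>1$. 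The only point you should make explicit is the approximation step: the bounded-embedding inequality is a priori known only for Dirichlet polynomials, so to apply it to $(f_N)_\tau$ you should either note that the partial sums of $(f_N)_\tau$ converge to it in $\mathcal{H}^p$ (absolute convergence for $\tau>0$) and use Fatou's lemma in the $\mathcal{A}^q_\beta$-integral, or argue by continuity of $u\mapsto\|(f_N)_{\tau+u}\|_{\mathcal{H}^q}$; this is routine but worth a sentence.
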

\begin{proof}
$(iii)\implies(i)$ is clear thanks to Theorem \ref{especials}. 

Let us assume $(i)$ and that $(iii)$ does not hold {\sl i.e.} $p<q$. In fact, we can assume that $p$ and $q$ are dyadic rationals. Hence, there exist infinitely many integers $m$ such that $mp/2$ and $mq/2$ are positive integers. Taking this into account, since $(i)$ holds:
\begin{align*}
\|\zeta^m_{\sigma}\|_{\mathcal{H}^p}^q
\geq
\|I_t(\zeta^m_{\sigma})\|_{\mathcal{A}^q_{\alpha}}^q
\approx
\int_0^{\infty}\|(\zeta_{\sigma}^m)_u\|_{\mathcal{H}^q}^qd\mu_{\alpha+qt}(u)
&=\int_0^{\infty}\|\zeta_{\sigma+u}^{mq/2}\|_{\mathcal{H}^2}^2d\mu_{\alpha+qt}(u)
\\
&
\geq\int_0^{2\sigma-1}\|\zeta_{\sigma+u}^{mq/2}\|_{\mathcal{H}^2}^2d\mu_{\alpha+qt}(u)
\end{align*}
Now, thanks to Theorem \ref{estimacionzm} and since ${\rm e}^{-2u}\approx1$ on $[0,2\sigma-1]$ when  $\sigma\to1/2^+$
we find that 
\begin{align*}
\int_0^{2\sigma-1}\|\zeta_{\sigma+u}^{mq/2}\|_{\mathcal{H}^2}^2d\mu_{\alpha+qt}(u)
&\approx
\int_0^{2\sigma-1}\frac{1}{(2\sigma+2u-1)^{\frac{m^2q^2}{4}}}u^{\alpha+qt}du\\&
\geq
\int_0^{2\sigma-1}\frac{1}{\big(3(2\sigma-1)\big)^{\frac{m^2q^2}{4}}}u^{\alpha+qt}du
\approx
\frac{1}{(2\sigma-1)^{\frac{m^2q^2}{4}-\alpha-qt-1}}\cdot
\end{align*}
On the other hand, by  Theorem \ref{estimacionzm}, we have that
\begin{align*}
\|\zeta_{\sigma}^m\|_{\mathcal{H}^p}^p
\approx\|\zeta^{\frac{mp}{2}}_{\sigma}\|_{\mathcal{H}^2}^2
\approx
\frac{1}{(2\sigma-1)^\frac{m^2p^2}{4}}\cdot
\end{align*}
Hence, still having in mind that $\sigma\to1/2^+$, we should have
\[
\frac{m^2pq}{4}\geq\frac{m^2q^2}{4}-(qt+1+\alpha).
\]
That is, $$p\geq q-4(qt+1+\alpha)/qm^2,$$
which is false when $m$ is large enough. So $(i)\implies(iii)$.

It remains to prove the equivalence between $(ii)$ and $(iii)$. 
Suppose first that Id$:\mathcal{H}^p\to\mathcal{A}^q_{\alpha+qt}$ is bounded. Using this and Theorem \ref{especials}, we have that
$$\forall f\in \mathcal{H}^p\,,\qquad\|f\|_{\mathcal{H}^p} \gtrsim\|f\|_{\mathcal{A}^q_{\alpha+qt}}\approx 
\|I_t(f)\|_{\mathcal{A}^q_{\alpha}}.$$
This implies $(i)$, which is equivalent to $(iii)$, and the conclusion follows. 

Let us now assume that $(iii)$ holds. Then, by $(i)$, the Riemann-Liouville integration operator $I_t$ maps boundedly $\mathcal{H}^p$ into $\mathcal{A}^q_{\alpha}$. Hence, using  Theorem \ref{especials},
$$\forall f\in \mathcal{H}^p\,,\qquad\|f\|_{\mathcal{H}^p}\gtrsim \|I_t(f)\|_{\mathcal{A}^q_{\alpha}}\approx\|f\|_{\mathcal{A}^q_{\alpha+qt}}.$$\end{proof}

In particular, we have the following statement for the integration operator:

Let $1\leq p,q <\infty$, $\alpha>-1$. Then, the following assertions are equivalent:
\begin{enumerate}
\item $I:\mathcal{H}^p\to \mathcal{A}^q_{\alpha}$ is bounded.
\item $\emph{Id}:\mathcal{H}^p\to \mathcal{A}^q_{\alpha+q}$ is bounded.
\item $p\geq q$.
\end{enumerate}

\begin{remark}
As a consequence of a theorem due to Bayart, see \cite[p. 50]{bayart1}, one has that the integration operator $I$ does not map $\mathcal{H}^1$ into $\mathcal{H}^2$ (our statement here is stronger). 
In fact, using Bayart's result and the coefficient estimates from \cite[Theorem 9]{pascal} some cases of the equivalence between $1)$ and $3)$ can also be deduced. 
The equivalence between 3) and 1) was proven in \cite[Theorem 3.1]{chen} for $p=q$. On the other hand, the equivalence between 2) and 3) was established in \cite[Corollary 4]{pascal} for the spaces $\mathcal{A}^q_{\alpha}$ with $\alpha=0$. Fu, Guo, and Yan using a different argument proved in \cite[Lemma 5.7]{fu} the compactness of the integration operator $I$ on $\mathcal{H}^p$ and $\mathcal{A}^p_{\alpha}$ for every $p\geq1$ and $\alpha>-1$.
\end{remark}

\section{Main result}\label{resultado}
\begin{lemma}\label{decreasing}
Let $u\in\C_{1/2}$. The functions $\Re(u)\mapsto\|\delta_u\|_{(\mathcal{A}^p_{\alpha})^*}$ and $\Re(u)\mapsto\|\delta_u\|_{(\mathcal{A}^p_{\alpha,\infty})^*}$ are non-increasing.
\end{lemma}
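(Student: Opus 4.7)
The plan is to reduce everything to two classical facts about the $\mathcal{H}^p$ spaces: vertical translations are isometries, and rightward horizontal translations are contractions.

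First, I would observe that $\|\delta_u\|$ depends only on $\sigma:=\Re(u)$. Indeed, writing $\tau=\Im(u)$ and considering the vertical translation operator $V_\tau f(s)=f(s+i\tau)$, this operator acts isometrically on $\mathcal{H}^p$ (the infinite polytorus $\T^\infty$ is invariant under the corresponding character), and hence, by the very definition \eqref{norm} applied slice by slice on the horizontal translates $P_\sigma$, it also acts isometrically on $\mathcal{A}^p_\mu$ (and on $\mathcal{A}^p_{\mu,\infty}$ since it preserves the first coefficient). Then $|f(u)|=|(V_\tau f)(\sigma)|\leq\|\delta_\sigma\|\|V_\tau f\|=\|\delta_\sigma\|\|f\|$, and the reverse inequality is symmetric.

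Next, I would prove that for every $t\geq0$, the horizontal translation operator $T_t$ is a contraction on $\mathcal{A}^p_\alpha$ (and on $\mathcal{A}^p_{\alpha,\infty}$, which is stable under $T_t$). The corresponding statement on $\mathcal{H}^p$, namely $\sigma\mapsto\|f_\sigma\|_{\mathcal{H}^p}$ is non-increasing on $[0,\infty)$, is standard (see for instance the discussion of horizontal translates in \cite{queffelecs} or in \cite{pascal}). Given this, for $f\in\mathcal{A}^p_\alpha$ and $t\geq 0$,
\begin{align*}
\|T_t f\|_{\mathcal{A}^p_\alpha}^p
=\int_0^\infty\|f_{\sigma+t}\|_{\mathcal{H}^p}^p\,d\mu_\alpha(\sigma)
\leq\int_0^\infty\|f_\sigma\|_{\mathcal{H}^p}^p\,d\mu_\alpha(\sigma)
=\|f\|_{\mathcal{A}^p_\alpha}^p.
\end{align*}

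Finally, to conclude monotonicity, take $1/2<\sigma_1<\sigma_2$ and set $t=\sigma_2-\sigma_1>0$. For any $f$ in $\mathcal{A}^p_\alpha$ (resp. $\mathcal{A}^p_{\alpha,\infty}$),
\begin{align*}
|f(\sigma_2)|=|(T_tf)(\sigma_1)|\leq\|\delta_{\sigma_1}\|_{(\mathcal{A}^p_\alpha)^*}\,\|T_tf\|_{\mathcal{A}^p_\alpha}\leq\|\delta_{\sigma_1}\|_{(\mathcal{A}^p_\alpha)^*}\,\|f\|_{\mathcal{A}^p_\alpha},
\end{align*}
and taking the supremum over $\|f\|\leq1$ yields $\|\delta_{\sigma_2}\|\leq\|\delta_{\sigma_1}\|$ in both norms. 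Combined with the first paragraph, this gives the claim.

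The only real point that needs care is the contractivity of $T_t$ for $t\geq0$ on $\mathcal{H}^p$; everything else is a one-line packaging. Since this is a well-documented property (it comes from the fact that $\sigma\mapsto\|f_\sigma\|_{\mathcal{H}^p}^p$ is logarithmically convex and tends to $|a_1|^p$ at $+\infty$, hence non-increasing from $0$), no serious obstacle is expected.
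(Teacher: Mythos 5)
Your argument is correct and is essentially the paper's own proof: both reduce the monotonicity to the fact that translation to the right is a contraction on $\mathcal{A}^p_\mu$, via $|f(u)|=|f_{u-v}(v)|\leq\|\delta_v\|\,\|f_{u-v}\|\leq\|\delta_v\|\,\|f\|$. The paper handles the translation by the complex number $u-v$ in one step (citing the contractivity of $f\mapsto f_z$ for $\Re z\geq0$ on $\mathcal{H}^p$ from Defant et al.), whereas you split it into a vertical isometry plus a horizontal contraction — a purely cosmetic difference.
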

\begin{proof}
Let $u,v\in\C_{1/2}$ be such that $\Re (u)>\Re (v)$. Then, by \cite[Proposition 11.20]{defant-peris},
\begin{align*}
|f(u)|=|f_{u-v}(v)|\leq \|f_{u-v}\|_{\mathcal{A}^p_{\mu}}\|\delta_{v}\|_{(\mathcal{A}^p_{\mu})^*}\leq \|f\|_{\mathcal{A}^p_{\mu}}\|\delta_v\|_{(\mathcal{A}^p_{\mu})^*}.
\end{align*}
Then, $\|\delta_u\|_{(\mathcal{A}^p_{\mu})^*}\leq \|\delta_v\|_{(\mathcal{A}^p_{\mu})^*}$, and the conclusion follows for $\mathcal{A}^p_{\alpha}$.
The proof is the same for $\mathcal{A}^p_{\alpha,\infty}$.\end{proof}

We are now ready to establish the main result of this work.
\begin{theorem}\label{MAIN}
Let $p\geq1$, $\alpha>-1$ and $a>\frac{1}{2}\,\cdot$
Then
\begin{equation}\label{encadrinfini}
\|\delta_s\|_{(\mathcal{A}^p_{\alpha,\infty})^*}\approx\frac{1}{(\sigma-1/2)^{\frac{\alpha+2}{p}}} \qquad\text{for every }\sigma\in(1/2,a)\,,
\end{equation}

\begin{equation}\label{encadr}
\|\delta_s\|_{(\mathcal{A}^p_{\alpha})^*}\approx\Big(\frac{\sigma}{\sigma-1/2}\Big)^{\frac{\alpha+2}{p}} \qquad\text{for every }\sigma>\dfrac{1}{2}\,,
\end{equation}

In particular, 
\begin{equation}\label{mainapprox}
\|\delta_s\|_{(\mathcal{A}^p_{\alpha,\infty})^*}\approx\|\delta_s\|_{(\mathcal{A}^p_{\alpha})^*}\approx\frac{1}{(\sigma-1/2)^{\frac{\alpha+2}{p}}}\qquad\text{when $\sigma\to\frac12^
	{+}\,\cdot$}
\end{equation}
where the underlying constants depend on $p$ and $\alpha$ (and $a$ in \eqref{encadrinfini}) only.

\end{theorem}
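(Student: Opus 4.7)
The upper bounds in \eqref{encadrinfini} and \eqref{encadr} follow from Theorem~\ref{estimacionconocida}, combined with Lemma~\ref{comparnormpointeval} to pass between the two spaces, and for $\alpha\in(-1,0)$ the matching lower bound is Theorem~\ref{estiminf}. So the whole content is to produce, for $\alpha\ge 0$ and $\sigma\in(1/2,a)$, a test function in $\mathcal{A}^p_{\alpha,\infty}$ realising the ratio $(\sigma-1/2)^{-(\alpha+2)/p}$.

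The plan is to pick $\beta\in(-1,0)$ and $t>0$ with $\alpha=\beta+tp$ (e.g.\ $\beta=-1/2$, $t=(\alpha+1/2)/p$), then use Theorem~\ref{especials} to shift the computation to the $\beta$-case. I would introduce the generalised divisor coefficients $d_{2/p}(n)>0$ defined by the Euler product
$$\zeta^{2/p}(s)=\prod_{k\ge 1}\bigl(1-p_k^{-s}\bigr)^{-2/p}=\sum_{n\ge 1}d_{2/p}(n)n^{-s},\quad \Re s>1,$$
(where $p_k$ is the $k$-th prime), and take the explicit test function
$$f_\sigma(s):=\sum_{n\ge 2}d_{2/p}(n)(\log n)^t\,n^{-\sigma}\,n^{-s}.$$
By Lemma~\ref{acciondir} one then has $I_t f_\sigma=\zeta_\sigma^{2/p}-1$, so Theorem~\ref{especials} yields the norm transfer $\|f_\sigma\|_{\mathcal{A}^p_\alpha}\approx \|\zeta_\sigma^{2/p}-1\|_{\mathcal{A}^p_\beta}$.

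The decisive step — and the one I expect to be the main obstacle — is the explicit Euler-product identity
$$\|\zeta_\tau^{2/p}\|_{\mathcal{H}^p}^p=\int_{\T^\infty}\prod_{k\ge 1}\bigl|1-\chi(p_k)/p_k^\tau\bigr|^{-2}\,dm_\infty(\chi)=\prod_{k\ge 1}\bigl(1-p_k^{-2\tau}\bigr)^{-1}=\zeta(2\tau)$$
for $\tau>1/2$, obtained via Fubini on the polytorus and the Parseval-type computation $\int_\T|1-w/r|^{-2}\,dw=(1-r^{-2})^{-1}$ for $r>1$. The whole point of the exponent $2/p$ is that raising the Bohr lift to the $p$-th power in modulus reduces the integrand to a product with exponent $-2$ and a closed-form integral. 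Inserting this into the definition of the $\mathcal{A}^p_\beta$-norm and performing the substitution $v=2u/(2\sigma-1)$ yields $\|\zeta_\sigma^{2/p}-1\|_{\mathcal{A}^p_\beta}^p\approx(2\sigma-1)^\beta$, the convergence $\int_0^\infty v^\beta(v+1)^{-1}dv<\infty$ being precisely where the assumption $\beta\in(-1,0)$ enters.

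For the pointwise value, a Karamata/Tauberian argument from the pole $\zeta^{2/p}(s)\sim(s-1)^{-2/p}$ at $s=1$ gives
$$f_\sigma(\sigma)=\sum_{n\ge 2}d_{2/p}(n)(\log n)^t n^{-2\sigma}\approx \frac{\Gamma(2/p+t)}{\Gamma(2/p)}(2\sigma-1)^{-(2/p+t)},$$
so that $|f_\sigma(\sigma)|/\|f_\sigma\|_{\mathcal{A}^p_\alpha}\approx (2\sigma-1)^{-(2/p+t)-\beta/p}=(2\sigma-1)^{-(\alpha+2)/p}$, which establishes \eqref{encadrinfini}. The asymptotic \eqref{mainapprox} is then immediate from Lemma~\ref{comparnormpointeval} since the lower bound blows up while the additive $1$ stays bounded. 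To upgrade to the full range $\sigma>1/2$ in \eqref{encadr}, I would combine this near-$1/2$ behaviour with Lemma~\ref{decreasing} (monotonicity of $\|\delta_s\|$ in $\Re s$), the upper bound of Theorem~\ref{estimacionconocida}, and the trivial $\|\delta_s\|_{(\mathcal{A}^p_\alpha)^*}\ge 1$ obtained from testing on the constant function, noting that the right-hand side of \eqref{encadr} tends to $1$ as $\sigma\to\infty$.
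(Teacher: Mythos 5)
Your proposal is correct in its essentials, but it takes a genuinely different route from the paper. The paper never constructs an explicit near-extremal function for $\alpha\ge 0$: it takes the known lower bound of Theorem~\ref{estiminf} for some $\alpha_0\in(-1,0)$ as input, writes an arbitrary polynomial $P$ as $I_t f$ with $\alpha=\alpha_0+tp$, and uses the pointwise representation $I_t f(s)=\frac{1}{\Gamma(t)}\int_0^\infty u^{t-1}f(u+s)\,du$ together with Theorem~\ref{especials} to obtain the averaged inequality \eqref{majodelta}, namely $\Gamma(t)\|\delta_s\|_{(\mathcal{A}^p_{\alpha_0,\infty})^*}\le\int_0^\infty u^{t-1}\|\delta_{u+s}\|_{(\mathcal{A}^p_{\alpha,\infty})^*}du$; it then splits this integral at $\lambda(\sigma-1/2)$, controls the near part by monotonicity (Lemma~\ref{decreasing}) and the far part by the upper bound of Theorem~\ref{estimacionconocida}, and absorbs by taking $\lambda$ large. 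You instead exhibit the explicit test function $f_\sigma=I_t^{-1}(\zeta_\sigma^{2/p}-1)$, compute $\|\zeta_\tau^{2/p}\|_{\mathcal{H}^p}^p=\zeta(2\tau)$ by the Euler-product/Parseval identity (this is exactly Bayart's computation for $\mathcal{H}^p$), transfer norms by Theorem~\ref{especials}, and evaluate $f_\sigma(\sigma)$ by a Karamata-type argument. Both proofs pivot on the same norm-transfer under $I_t$; yours has the merit of producing concrete near-extremizers and of not needing Theorem~\ref{estiminf} as a black box, at the cost of two steps you should justify carefully: (a) the Tauberian asymptotic $\sum_{n\ge2}d_{2/p}(n)(\log n)^t n^{-2\sigma}\approx(2\sigma-1)^{-(2/p+t)}$ for fractional exponents $2/p$ and $t$ (Hardy--Littlewood--Karamata applied to the Laplace--Stieltjes transform of $\sum_{\log n\le U}d_{2/p}(n)n^{-1}$), and (b) the membership of $\zeta_\sigma^{2/p}-1$ and $f_\sigma$ in the spaces $\mathcal{A}^p_{\beta,\infty}$, $\mathcal{A}^p_{\alpha,\infty}$ defined as closures of polynomials, which requires an approximation argument (positivity of the coefficients makes this routine). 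Your treatment of the range $\sigma\in[\sigma_0,a)$ for \eqref{encadrinfini} should use a nonzero test element of $\mathcal{A}^p_{\alpha,\infty}$ such as $2^{-s}$ rather than the constants, exactly as in the proof of Theorem~\ref{estiminf}; with that adjustment the deduction of \eqref{encadr} and \eqref{mainapprox} is the same as in the paper.
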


\begin{proof}
It suffices to prove the lower estimates since the upper ones are known from Theorem~\ref{estimacionconocida}.

Let us show the lower estimate in \eqref{encadrinfini}.

To this purpose, let $P$ be a Dirichlet polynomial vanishing at infinity. Then, there exists a Dirichlet series $f$, vanishing also at infinity, such that $I_t(f)=P$. Taking this into account and applying Theorem \ref{equivnormas}, we have that

$$\|f\|_{\mathcal{A}^p_{\alpha+tp}}\approx\|I_tf\|_{\mathcal{A}^p_{\alpha}}=\|P\|_{\mathcal{A}^p_{\alpha}}$$
where the constants depend on $\alpha$, $p$, and $t$ only. 

We already have the result when $\alpha\in(-1,0)$ thanks to Theorem~\ref{estiminf}. Let $\alpha\geq0$ and $\alpha_0\in(-1,0)$. 
Choose $t>0$ so that $\alpha=\alpha_0+tp$.
This, together with the boundedness of the functional $\delta_s$, $s\in\C_{1/2}$, on $\mathcal{A}^p_{\alpha,\infty}$, $\alpha>0$, yields  
\begin{align*}
|P(s)|= |I_t(f)(s)|=\left|\int_0^{+\infty}\frac{u^{t-1}}{\Gamma(t)}f(u+s)\,du\right|
&\leq
\int_0^{+\infty}\frac{u^{t-1}}{\Gamma(t)}\|\delta_{u+s}\|_{(\mathcal{A}^p_{\alpha_0+tp,\infty})^*}
\|f\|_{\mathcal{A}^p_{\alpha_0+tp}}\,du\\
&\lesssim
\int_0^{+\infty}\frac{u^{t-1}}{\Gamma(t)}\|\delta_{u+s}\|_{(\mathcal{A}^p_{\alpha,\infty})^*}
\|P\|_{\mathcal{A}^p_{\alpha_0}}\,du.
\end{align*}
That is,
\begin{equation}\label{majodelta}
\Gamma(t)\|\delta_s\|_{(\mathcal{A}^p_{\alpha_0,\infty})^*}
\leq 
\int_0^{+\infty} u^{t-1}\|\delta_{u+s}\|_{(\mathcal{A}^p_{\alpha,\infty})^*}\,du.
\end{equation}
We can assume that $s$ is real. 
We pick some positive $\lambda$ (we shall choose it large enough later) and split the latter integral in two and apply Lemma \ref{decreasing} to the first term and Theorem \ref{estimacionconocida} to the second one so that, for some $c>0$ (coming from Th.\ref{estimacionconocida}),
\begin{align*}
\int_0^{\lambda(s-\frac12)}& u^{t-1}\|\delta_{u+s}\|_{(\mathcal{A}^p_{\alpha,\infty})^*}\,du
+
\int_{\lambda(s-\frac12)}^{+\infty}
u^{t-1}\|\delta_{u+s}\|_{(\mathcal{A}^p_{\alpha,\infty})^*}\,du\\
&\leq 
\|\delta_{s}\|_{(\mathcal{A}^p_{\alpha,\infty})^*} \int_0^{\lambda(s-\frac12)} u^{t-1}\,du
+ c
\int_{\lambda(s-\frac12)}^{+\infty}\frac{u^{t-1}}{(u+s-1/2)^{\frac{\alpha+2}{p}}}\,du\\
&\leq
\dfrac{\lambda^t}{t}(s-1/2)^t\|\delta_{s}\|_{(\mathcal{A}^p_{\alpha,\infty})^*}+c\int_{\lambda(s-\frac12)}^{+\infty}
\frac{du}{u^{\frac{\alpha_0+2}{p}+1}}\,du\\&
\leq 
\dfrac{\lambda^t}{t}(s-1/2)^t\|\delta_{s}\|_{(\mathcal{A}^p_{\alpha,\infty})^*}
+
\frac{K(\lambda)}{(s-1/2)^{\frac{\alpha_0+2}{p}}}
\end{align*}
where $K(\lambda)=\frac{pc}{\alpha_0+2}\lambda^{-\frac{\alpha_0+2}{p}}$, $K(\lambda)\to0$ as $\lambda\to\infty$. 

By Theorem \ref{estiminf} and using \eqref{majodelta}, we get
\[
\frac{c_0\Gamma(t)}{(s-1/2)^{\frac{\alpha_0+2}{p}}}\leq\Gamma(t)\|\delta_s\|_{(\mathcal{A}^p_{\alpha_0,\infty})^*}
\leq\dfrac{\lambda^t}{t}(s-1/2)^t\|\delta_{s}\|_{(\mathcal{A}^p_{\alpha,\infty})^*}
+
\frac{K(\lambda)}{(s-1/2)^{\frac{\alpha_0+2}{p}}}\cdot
\]
where $c_0>0$.

Now we choose  $\lambda$ large enough so that $K(\lambda)<\frac12c_0\Gamma(t)$, which gives,
\[
\frac{1}{(s-1/2)^\frac{\alpha+2}{p}}
=
\frac{1}{(s-1/2)^{\frac{\alpha_0+2}{p}+t}}\lesssim\|\delta_{s}\|_{(\mathcal{A}^p_{\alpha,\infty})^*}. 
\]

Thanks to Lemma~\ref{comparnormpointeval}, we clearly obtain \eqref{mainapprox}.

Finally, to prove the lower estimate in \eqref{encadr}, testing the constant functions, which we are not allowed for $\mathcal{A}^{p}_{\alpha,\infty}$, we first point out that $\|\delta_{s}\|_{(\mathcal{A}^p_{\alpha})^*}\geq1$ for every $s\in\C_{\frac{1}{2}}$.
Therefore\vskip-5pt
$$\|\delta_{s}\|_{(\mathcal{A}^p_{\alpha})^*}\geq\max\Big\{1;\|\delta_{s}\|_{(\mathcal{A}^p_{\alpha,\infty})^*}\Big\}\gtrsim\Big(\frac{\sigma}{\sigma-1/2}\Big)^{\frac{\alpha+2}{p}}\;.$$
\vskip-0.55cm\end{proof}

The third-named author was supported in part by the Labex CEMPI (ANR-11-LABX-0007-01).
\vskip-10pt
\bibliographystyle{siam}
\bibliography{biblioSGRLD.bib}
\end{document}